\newtheorem{thm}{Theorem}
\newtheorem{prop}{Proposition}
\newtheorem{lem}[thm]{Lemma}
\newdefinition{rem}{Remark}
\newdefinition{defi}{Definition}
\newproof{pf}{Proof}
\journal{*****}
\begin{document}

\begin{frontmatter}

\title{Meromorphically integrable homogeneous potentials with multiple Darboux points}
\author{Thierry COMBOT\fnref{label2}}
\ead{combot@imcce.fr}
\address{IMCCE, 77 Avenue Denfert Rochereau 75014 PARIS}

\title{Meromorphically integrable homogeneous potentials with multiple Darboux points}

\author{}

\address{}

\begin{abstract}
We prove that the only meromorphically integrable planar homogeneous potentials of degree $k\neq -2,0,2$ having a multiple Darboux point is the potential invariant by rotation. This case is a singular case of the Maciejewski-Przybylska relation on eigenvalues at Darboux points of homogeneous potentials, and needed before a case by case special analysis. The most striking application of this Theorem is the complete classification of integrable real analytic homogeneous potentials in the plane of negative degree.
\end{abstract}

\begin{keyword}
Morales-Ramis theory\sep homogeneous potential \sep central configurations \sep differential Galois theory


\end{keyword}

\end{frontmatter}

\section{Introduction}

In this article, we want to study homogeneous potentials, which correspond to Hamiltonian systems of the form
$$H(p,q)=\frac{1}{2}\sum\limits_{i=1}^n p_i^2+V(q)$$
In particular, we are interested in meromorphic integrability of such potentials. Such type of problems have already been studied in several articles \cite{1,5,9,10,12,3,4}, in particular using the integrability conditions of the Morales-Ramis Theorem \cite{11}. Maciejewski-Przybylska found an important relation in \cite{4} which helps to test these integrability conditions. However, this relation does not hold in all cases, and not in particular in the case of a multiple Darboux point. The authors manage to circumvent this problem, analysing integrability of homogeneous polynomial potentials of degree $3,4$ in \cite{3,4}, but a more precise analysis of this special case seems possible. This case of a multiple Darboux point also appears in \cite{10}, and the analysis of this case leads to the proof that the only integrable real analytic homogeneous potential of degree $-1$ is the potential invariant by rotation.

Often, it is assumed that the potential should be meromorphic. As in our classification (Theorem \ref{thmmain1}), we will have to consider the potential $V=(q_1^2+q_2^2)^{k/2}$ which is not meromorphic for odd $k$, such an assumption is too restrictive. Still, in \cite{91}, the author present an extension of the Morales-Ramis-Simo Theorem dealing with such algebraic extensions. Following \cite{91}, we will consider $P_1,\dots,P_s\in\mathbb{C}[q_1,\dots,q_n,w_1,\dots,w_s]$ be $s$ polynomials, and $I$ the ideal generated by this polynomials, assumed to be a $n$-dimensional prime ideal. We now consider the algebraic complex manifold $\mathcal{S}=I^{-1}(0)$ and a open set $\Omega\subset \mathcal{S}$ such that
\begin{itemize}
\item The rank of the Jacobian of the application $w\mapsto (P_1(w),\dots,P_s(w))$ is maximal on $\mathcal{S}$ except at most on a codimension $1$ subset of $\mathcal{S}$, noted $\Sigma(\mathcal{S})$.
\item $\exists k_0,\dots,k_s \in\mathbb{Z},\; k_0\neq 0, \forall \alpha\in\mathbb{C}^*$
$$(q,w)\in\Omega \Rightarrow (\alpha^{k_0} q_1,\alpha^{k_0} q_2,\alpha^{k_1} w_1,\dots, \alpha^{k_s} w_s)\in\Omega$$
\item $\Omega \cap \Sigma(\mathcal{S})=\emptyset$
\end{itemize}
Now we can define on $\Omega$ a holomorphic function $V$, which will be our potential. We will call such potential a \emph{holomorphic homogeneous potential on $\Omega$}, and this will be the general setting for homogeneous potentials for the whole article. Remark that this case contains also the case of a meromorphic potential on an open set, just by removing the points of $\Omega$ on which $V$ is singular. Our interest is to study the integrability of such potentials. Let us first define what we will call meromorphically integrable

\begin{defi}
Let $V$ be a holomorphic homogeneous potential on $\Omega$. We say that $V$ is \emph{meromorphically integrable} if there exists $n$ first integrals, meromorphic on $\mathbb{C}^n\times \Omega$, functionally independent and in involution.
\end{defi}

The main theorems of this article are the following

\begin{thm}\label{thmmain1}
Let $V$ be a holomorphic homogeneous potential on $\Omega$ of degree $k\neq -2,0,2$ in dimension $2$. If $V$ has a multiple Darboux point $c\in \Omega$ and is meromorphically integrable, then
$$V=a(q_1^2+q_2^2)^{k/2}$$
\end{thm}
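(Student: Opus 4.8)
The plan is to exploit the Morales-Ramis criterion together with the special structure that a multiple Darboux point forces on the potential. Recall that for a homogeneous potential $V$ of degree $k$ in the plane, a Darboux point is a direction $c$ with $\nabla V(c)=\lambda c$ (after suitable normalization); it is \emph{multiple} when the associated Hessian $V''(c)$ has a degenerate spectrum, i.e. the eigenvalue governing the variational equation coincides with the trivial one coming from homogeneity. I would first set up the variational equation along the homothetic orbit through $c$. After the standard Yoshida change of variable $t\mapsto$ the Weierstrass-type coordinate, the normal variational equation becomes a hypergeometric (or Gauss) equation whose parameters depend on $k$ and on the eigenvalue $\lambda_c$ of $V''(c)$ restricted to the direction transverse to $c$. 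Meromorphic integrability forces, via Morales-Ramis, that the identity component of the differential Galois group be abelian, which pins $\lambda_c$ to the classical Yoshida table of admissible values.

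The second step is to extract the constraint coming from multiplicity. At an ordinary Darboux point the Maciejewski-Przybylska relation links the transverse eigenvalues at all Darboux points; the hypothesis that $c$ is multiple is precisely the degenerate case where this relation fails, and I would argue that degeneracy forces the eigenvalue $\lambda_c$ to take the boundary value $\lambda_c=k-1$ (the value corresponding to the rotationally invariant potential). Concretely, I would compute $V''(c)$ and show that the double eigenvalue condition, combined with the Morales-Ramis admissibility, leaves essentially one free configuration. The key is to translate "multiple Darboux point'' into an algebraic condition on the second and third derivatives of $V$ at $c$, and to feed this into the higher variational equations (VE$_2$, VE$_3$) if the first-order analysis does not already rule out everything.

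The third and decisive step is the reconstruction: from the eigenvalue data at the multiple Darboux point one must recover $V$ globally. Here I would use homogeneity to write $V$ in the angular variable, say $V=r^k f(\theta)$ in polar coordinates $q_1+iq_2=re^{i\theta}$, so that Darboux points correspond to critical points of $f$ and the transverse eigenvalue is expressed through $f$ and $f''$ at those points. The multiple-point condition becomes a differential relation on $f$ at $\theta_c$, and I would show that the only $f$ consistent with the Morales-Ramis constraint at a multiple point, \emph{and} holomorphic on all of $\Omega$, is the constant function, giving $V=a(q_1^2+q_2^2)^{k/2}$. The main obstacle I anticipate is precisely this last rigidity argument: local eigenvalue information constrains $f$ only pointwise, so I expect to need either the analytic continuation/monodromy of the angular ODE or an additional global integrability obstruction (for instance applying Morales-Ramis at a second, generic Darboux point produced by a nonconstant $f$) to eliminate all nonrotational candidates. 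Degenerate subcases where $V''(c)$ has a nilpotent part, rather than a genuine double eigenvalue, will likely require separate treatment and are where the excluded degrees $k=-2,0,2$ enter as genuine exceptions.
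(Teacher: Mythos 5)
Your first step is fine as far as it goes, but it misstates the multiplicity condition, and your decisive third step is exactly where the proof has to live --- and you leave it as an acknowledged obstacle rather than closing it. On the multiplicity condition: a multiple Darboux point is a degenerate solution of $\nabla V(c)=kc$, i.e.\ the Jacobian $\nabla^2V(c)-kI$ is singular, which forces the transverse eigenvalue to be $\lambda_c=k$ --- not $\lambda_c=k-1$, and not a collision with the homogeneity eigenvalue $k(k-1)$ (that collision occurs only at isotropic points $c_1^2+c_2^2=0$, and one checks it is then incompatible with multiplicity, which also disposes of your anticipated nilpotent subcase). The real difficulty is that $\lambda=k$ \emph{is} an admissible Morales--Ramis eigenvalue, so the first-order analysis, and indeed any analysis based on eigenvalues at Darboux points, yields no contradiction at all. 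Your two proposed escapes do not work: a nonconstant angular part $f$ need not produce a second Darboux point lying in $\Omega$ with a forbidden eigenvalue (fewer or degenerate Darboux points is precisely the regime under study), and VE$_2$, VE$_3$ alone give only finitely many conditions on the infinitely many Taylor coefficients of $V$ at $c$, so they cannot force $f$ to be constant.

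The paper closes this gap with a rigidity argument using the variational equations of \emph{every} order $l$, run as a uniqueness statement rather than a reconstruction: if $V_1,V_2$ are both integrable with the multiple Darboux point $c=(1,0)$, one shows by induction that all their derivatives at $c$ agree. The Euler relations determine all order-$(l+1)$ derivatives from lower ones except the single coefficient $d_{l,l+1}=\partial_{q_2}^{l+1}V(c)$, which enters the order-$l$ variational equation linearly as an inhomogeneous term whose particular solution is the iterated integral
$$G_{l,k}(t)=\int\left(\int (t^2-1)^{-(k+1)/k}\,dt\right)^{l+1}(t^2-1)^{1/k}\,dt.$$
An explicit monodromy computation (commutators act through the determinant of a period matrix, evaluated via $\Gamma$-function formulas) shows $G_{l,k}$ has noncommutative monodromy for the degrees in question, so the abelianity of the Galois group forced by Morales--Ramis--Simo pins $d_{l,l+1}$ uniquely at each order; hence $V_1=V_2$, and since $a(q_1^2+q_2^2)^{k/2}$ is an integrable potential with a multiple Darboux point at $c$, it is the only one (with $k=-1$ quoted from earlier work). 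This infinite hierarchy of conditions, together with the reduction of each order to a single free coefficient, is the missing idea in your proposal; without it, the "local eigenvalue information constrains $f$ only pointwise" obstacle you correctly identify is fatal.
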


\begin{thm}\label{thmmain2}
Let $V$ be a real analytic homogeneous potential on $\mathbb{R}^2\setminus \{0\}$ of degree $k<0$. The potential $V$ is meromorphically integrable if and only if
$$k=-2 \hbox{ or } V=a(q_1^2+q_2^2)^{k/2} \;\;a\in\mathbb{R}$$
\end{thm}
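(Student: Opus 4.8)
The plan is to treat the two directions separately. The sufficiency is elementary. If $V=a(q_1^2+q_2^2)^{k/2}$ then $V$ is invariant under the rotation group, so the angular momentum $L=q_1p_2-q_2p_1$ Poisson-commutes with $H$; since $L$ is polynomial and independent of $H$, the system is meromorphically integrable. If $k=-2$, writing $V=r^{-2}W(\theta)$ in polar coordinates gives $H=\tfrac12 p_r^2+\tfrac{1}{r^2}\bigl(\tfrac12 p_\theta^2+W(\theta)\bigr)$, and a direct computation shows that $G=\tfrac12 p_\theta^2+W(\theta)$ is a first integral; as $p_\theta=L$ is polynomial and $W=(q_1^2+q_2^2)V$ is holomorphic on $\Omega$, $G$ is meromorphic and independent of $H$, so again $V$ is meromorphically integrable.

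For the necessity, assume $V$ is meromorphically integrable of degree $k<0$, $k\neq-2$, and write $V=r^kW(\theta)$ with $W$ real analytic and $2\pi$-periodic. If $W$ is constant we are done, so suppose it is non-constant. In polar coordinates $\nabla V$ is radial exactly at the critical points of $W$, so the real Darboux points are the $\theta_0$ with $W'(\theta_0)=0$; at such a point with $W(\theta_0)\neq0$ the Hessian carries the forced radial eigenvalue $k-1$ together with a transverse eigenvalue that I would compute to be
\[
\lambda=1+\frac{W''(\theta_0)}{k\,W(\theta_0)}.
\]
Because $k\neq -2,0,2$, the Morales--Ramis (Yoshida) obstruction applies at each simple real Darboux point: $\lambda$ must lie in the admissible set whose principal family is $\Lambda_k=\{\,p+\tfrac{k}{2}p(p-1):p\in\mathbb{Z}\,\}$.

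I would then split into two cases. If some critical point of $W$ is degenerate, i.e. $W''(\theta_0)=0$, or if $W$ vanishes at a critical point, then the corresponding Darboux point is multiple and Theorem \ref{thmmain1} forces $V=a(q_1^2+q_2^2)^{k/2}$, contradicting that $W$ is non-constant. Otherwise every critical point is non-degenerate with $W(\theta_0)\neq0$. Taking the global maximum of $W$ when $W$ is somewhere positive (respectively the global minimum when $W$ is somewhere negative) produces a point where $W''/W<0$, hence $\lambda>1$. A short analysis of $\Lambda_k$ gives $\lambda_{p+1}-\lambda_p=1+kp$, so for $k\le -1$ the sequence peaks at $p=1$ with $\max\Lambda_k=\lambda_1=1$; thus $\lambda>1$ is impossible and $W$ must be constant. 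This settles every degree $k\le -1$ with $k\neq -2$, and it is reassuring that $k=-2$ escapes, since there the normal variational equation is non-generic and $\Lambda_k$ does not apply --- precisely the reason all degree $-2$ potentials are integrable.

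The remaining and genuinely hard range is $-1<k<0$, where $1+kp>0$ at $p=1$ so that $\Lambda_k$ contains values exceeding $1$ (for instance $\lambda_2=2+k$), and a single extremum no longer yields a contradiction. I expect this to be the main obstacle: one must pass to a global constraint relating the eigenvalues at \emph{all} Darboux points, real and complex --- the Maciejewski--Przybylska relation --- and exploit that its degenerate, colliding-point instances are exactly the multiple Darboux points controlled by Theorem \ref{thmmain1}. Reconciling that polynomial-type relation with the real-analytic, non-polynomial setting of $W$, and ruling out the finitely many discrete eigenvalue configurations it still permits for $-1<k<0$ (together with the sporadic admissible values at special degrees), is where the real work lies.
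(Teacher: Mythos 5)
Your overall route is the paper's own: pass to polar coordinates $V=r^kW(\theta)$, pick a sign-adapted global extremum of $W$ so that $W(\theta_0)\neq 0$ and $W''(\theta_0)/W(\theta_0)\leq 0$, feed the transverse eigenvalue into the Morales--Ramis table, and reduce to Theorem \ref{thmmain1}. (Your normalization differs --- you divide the Hessian eigenvalues by $k$, so your pair $\{k-1,\,1+W''/(kW)\}$ matches the paper's $\{k(k-1),\,k+W''/W\}$ --- and your family $\Lambda_k$ is indeed the first table family in that normalization; that part is consistent.) But there are genuine gaps. First, you check only the ``principal'' family: the table also contains the second integer family $\tfrac12(ik+k-1)(ik+1)$ and sporadic entries at $k=-3,-4,-5$, and the fact the argument actually needs --- which the paper invokes --- is that for $k<0$, $k\neq-2$, the \emph{only} admissible eigenvalue $\leq k$ (i.e.\ normalized $\geq 1$) is $k$ itself; this is true but your verification omits everything beyond $\Lambda_k$. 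Second, your claim that a critical point where $W$ vanishes gives a multiple Darboux point is false: on such a ray $\nabla V\equiv 0$ while $kc\neq 0$, so there is no Darboux point at all and Theorem \ref{thmmain1} cannot be invoked there. This branch is also unnecessary --- your chosen extremum automatically has $W(\theta_0)\neq 0$ when $W$ is non-constant, which is exactly the point of the paper's three-case sign analysis --- but as written the case split is broken.

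The most serious defect is that your proof is unfinished by its own admission: you declare $-1<k<0$ a ``genuinely hard range'' requiring a Maciejewski--Przybylska global relation, and you do not carry that out. No such detour is needed. In the framework of this paper the homogeneity degree is an integer (the table of Theorem \ref{thmmorales2} requires $k\in\mathbb{Z}^*$, and Lemma \ref{lem2}, on which Theorem \ref{thmmain1} rests, requires $k\in\mathbb{Z}$), so the only degrees in play are $k=-1$ and $k\leq -3$, which your extremum argument already covers. Moreover, the paper's endgame is slightly sharper than yours: rather than deriving a contradiction from $\lambda>1$ in the non-degenerate case, it concludes from the table that the eigenvalue must equal $k$ exactly, hence $W''(\theta_0)=0$, hence the Darboux point is \emph{multiple}, and then Theorem \ref{thmmain1} (whose $k=-1$ case comes from \cite{10}) yields $V=a(q_1^2+q_2^2)^{k/2}$ in every case. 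Your sufficiency direction (angular momentum; the extra integral $\tfrac12 p_\theta^2+W(\theta)$ for $k=-2$) is correct and in fact more explicit than the paper's one-line assertion.
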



Let us now remark that a homogeneous potential $V$, real analytic on $\mathbb{R}^2\setminus \{0\}$ is a holomorphic potential on an open set $\Omega\subset \mathbb{C}^2$. Using the homogeneity, we can moreover choose $\Omega$ such that $\forall \alpha\in\mathbb{C}^*,q\in\Omega,\;\; \alpha q\in\Omega$. Thus, in this Theorem, the meromorphic integrability of such a potential assumes here meromorphic first integrals on $\mathbb{C}^n\times \Omega$.\\

\noindent
The structure of the article is the following
\begin{itemize}
\item We first present and define the Morales-Ramis-Simo Theorem, its relation with Darboux points, and the motivation of the study of the particular case of multiple Darboux points.
\item We then prove that Theorem \ref{thmmain2} is a consequence of Theorem \ref{thmmain1}, as any real analytic integrable homogeneous potential will have a multiple real Darboux point.
\item We finally prove the Theorem \ref{thmmain1} using a notion of rigidity of higher variational equations, already presented in \cite{10}.
\end{itemize}

\section{Morales-Ramis-Simo Theorem and Darboux points}

\subsection{The Morales-Ramis-Simo Theorem}

Let us first write the Morales-Ramis-Simo Theorem, and in particular a version in \cite{91} for potentials on an algebraic variety.

\begin{thm}\label{thmmorales} (Combot \cite{91})
Let $V$ be a holomorphic homogeneous potential on $\Omega \subset \mathcal{S}$ and $\Gamma\subset \mathbb{C}^n\times \Omega$ a non-stationary orbit of $V$. If there are $n$ first integrals meromorphic on $\mathbb{C}^n\times \Omega$ of $V$ that are in involution and functionally independent over an open neighbourhood of $\Gamma$, then the identity component of Galois group of the variational equation near $\Gamma$ is Abelian over the base field of meromorphic functions on $\Gamma$.
\end{thm}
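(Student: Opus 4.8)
The plan is to reduce this to the classical Galoisian obstruction argument of Morales--Ramis, transported from the ambient space $\mathbb{C}^n\times\Omega$ down to the constrained phase space determined by the ideal $I$, and then applied to the first variational equation along $\Gamma$. First I would fix a point of $\Gamma$; since $\Omega\cap\Sigma(\mathcal{S})=\emptyset$ by hypothesis, the Jacobian of $(P_1,\dots,P_s)$ has maximal rank all along $\Gamma$, so $\mathcal{S}$ is locally a smooth $n$-dimensional manifold there and the relevant cotangent-type phase space carries its canonical symplectic form $\omega$. The Hamiltonian $H=\tfrac12\sum p_i^2+V(q)$ restricts to this symplectic manifold and admits $\Gamma$ as a non-stationary orbit. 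The base field $K$ is the field of meromorphic functions on $\Gamma$, regarded as a differential field with the derivation induced by the time parameter along $\Gamma$.

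Next I would linearize the flow along $\Gamma$ to obtain the variational equation $\mathrm{VE}_\Gamma$, a linear system over $K$ on the $2n$-dimensional tangent spaces along the orbit. The crucial feature is that $\mathrm{VE}_\Gamma$ is again Hamiltonian: linearizing a Hamiltonian vector field yields the linear Hamiltonian system associated to the Hessian of $H$ along $\Gamma$, so the solution space is symplectic and the differential Galois group $G$ embeds in $\mathrm{Sp}(2n,\mathbb{C})$. I would then convert each of the $n$ meromorphic first integrals $F_1,\dots,F_n$ into a first integral of $\mathrm{VE}_\Gamma$ by taking its differential, or its lowest-order non-vanishing jet, transverse to $\Gamma$. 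Involution of the $F_i$ descends to Poisson-commutativity of the linearized integrals with respect to $\omega$, and functional independence descends to functional independence on a Zariski-dense subset of the fibres, producing $n$ functionally independent, pairwise commuting rational first integrals of the variational equation.

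The final step is purely algebraic. Rational first integrals of $\mathrm{VE}_\Gamma$ are precisely the rational functions on the solution space fixed by $G$, hence fixed by the identity component $G^0$ of finite index, so $\mathfrak{g}=\mathrm{Lie}(G^0)$ preserves $n$ functionally independent invariants in involution. I would then invoke the infinitesimal symplectic lemma underlying Morales--Ramis: a connected algebraic subgroup of $\mathrm{Sp}(2n,\mathbb{C})$ whose Lie algebra preserves $n$ functionally independent commuting invariants must be Abelian, since the commuting integrals force $\mathfrak{g}$ into a maximal isotropic commutative subalgebra. Abelianity of $G^0$ over $K$ follows.

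The step I expect to be the main obstacle is the second one: passing from functional independence of the $F_i$ \emph{on the constrained manifold} to functional independence of their linearizations as first integrals of $\mathrm{VE}_\Gamma$. Because the phase space is cut out by the ideal $I$ rather than being all of $\mathbb{C}^{2n}$, one must control the interplay between the constraints $P_j$, the transverse jets of the $F_i$, and a possible systematic rank drop along $\Gamma$. Ruling out such a degeneration — which may legitimately occur on the codimension-one locus but is excluded on $\Gamma$ precisely by the hypothesis $\Omega\cap\Sigma(\mathcal{S})=\emptyset$ — is the delicate technical heart of adapting the classical statement to the algebraic-variety setting of \cite{91}.
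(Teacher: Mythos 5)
The paper contains no proof of this theorem: it is quoted verbatim from \cite{91}, the only added content being the remark that the original hypothesis $\Gamma\not\subset\mathbb{C}^n\times\Sigma(\mathcal{S})$ is automatically satisfied because $\Sigma(\mathcal{S})$ has been excluded from $\Omega$. So your proposal can only be measured against the proof in \cite{91}, and there it misses the step that constitutes the actual novelty of the algebraic-variety version. Since $\Omega\cap\Sigma(\mathcal{S})=\emptyset$, the projection $(q,w)\mapsto q$ is a local biholomorphism along $\Gamma$ and the system is locally an ordinary holomorphic Hamiltonian system; the genuinely new issue is global: the $w_i$ are multivalued algebraic functions of $q$, so one must pass to a finite branched covering of $\Gamma$ on which they become single-valued, apply the classical Morales--Ramis--Sim\'o theorem there, and then descend using the fact that the identity component of a differential Galois group is unchanged under a finite algebraic extension of the base field. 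That base-change argument is what yields abelianity \emph{over the field of meromorphic functions on $\Gamma$}, as the statement requires, and your sketch never engages with it --- you treat the problem as if the classical theorem applied directly once local smoothness of $\mathcal{S}$ along $\Gamma$ is secured. Relatedly, you mislocate the delicate point: the interplay of the constraints $P_j$ with transverse jets, which you single out as the technical heart, is essentially harmless off $\Sigma(\mathcal{S})$, precisely because of the local biholomorphism just mentioned.

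Within the classical part of the argument there is a further genuine gap at your descent step. The assertion that functional independence of $F_1,\dots,F_n$ near $\Gamma$ ``descends'' to functional independence of their differentials or lowest-order non-vanishing jets as first integrals of the variational equation is false as stated: the leading jets of $n$ independent meromorphic integrals along an orbit can perfectly well be functionally dependent. Repairing this is exactly the role of Ziglin's lemma, which lets one replace the family by polynomial combinations whose leading forms are independent, and any complete write-up must invoke it. Two smaller points: the abelianity lemma you cite (a connected algebraic subgroup of $\mathrm{Sp}(2n,\mathbb{C})$ preserving $n$ independent commuting rational invariants is abelian) is indeed the correct key lemma of Morales--Ramis, but your one-line justification via ``a maximal isotropic commutative subalgebra'' is not the actual mechanism of its proof; and the conclusion concerns the Galois group over meromorphic functions on a suitable completion of $\Gamma$ as a Riemann surface (including equilibria and points at infinity), a compactification step your sketch also leaves untouched.
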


Remark that in the original statement, there is an additional hypothesis that $\Gamma \not\subset \mathbb{C}^n \times \Sigma(\mathcal{S})$. Here this hypothesis is automatically satisfied as we have already excluded $\Sigma(\mathcal{S})$ from $\Omega$. This Theorem needs an explicit algebraic orbit $\Gamma$, and in the case of homogeneous potentials, we can build generically such orbits thanks to Darboux points.

\begin{defi}\label{darb}
Let $V$ be a holomorphic homogeneous potential of degree $k$ on $\Omega \subset \mathcal{S}$. A \emph{Darboux point} $c\in \Omega\setminus \{0\}$ of $V$ satisfies the equation
$$\frac{\partial V}{\partial q_i }(c)=k c_i\quad i=1\dots n$$
To a Darboux point $c$, we associate an orbit (called \emph{homothetic orbit})
$$q(t)=\phi(t)^{k_0}.(c_1,\dots,c_n) \quad w(t)=(\phi(t)^{k_1} c_{n+1},\dots \phi(t)^{k_s} c_{n+s})$$
$$\frac{1} {2} k_0^2\phi^{2(k_0-1)}\dot{\phi}^2=\frac{\alpha}{k}\phi^{k_0k}+1$$
\end{defi}

Remark here that the derivation in $q_i$ on the algebraic variety $\mathcal{S}$ should be understand as the $w$ being algebraic (multivalued) functions (as presented in \cite{91}). Using Darboux points, Morales and Ramis proved a very effective criterion of meromorphic integrability

\begin{thm}\label{thmmorales2} (Morales-Ramis \cite{65}, Combot \cite{91} for the algebraic case)
Let $V$ be a homogeneous holomorphic potential on $\Omega \subset \mathcal{S}$ of homogeneity degree $k\in\mathbb{Z}^*$ and $c\in \Omega\setminus \{0\}$ a Darboux point of $V$. If $V$ is meromorphically integrable in the Liouville sense, then for any $\lambda\in \hbox{Sp}(\nabla^2 V(c))$, the couple $(k,\lambda)$ belongs to the table
\begin{center}\begin{tabular}{|c|c|c|c|}
\hline
$k$&$\lambda$&$k$&$\lambda$\\\hline
$\mathbb{Z}^*$&$\frac{1}{2}\,ik \left( ik+k-2 \right)$&$-3$&$-\frac{25}{8}+\frac{1}{8}(\frac{6}{5}+6 i)^2$ \\\hline
$\mathbb{Z}^*$&$\frac{1}{2}\left( ik+k-1 \right)  \left( ik+1 \right)$&$-3$&$-\frac{25}{8}+\frac{1}{8}(\frac{12}{5}+6 i)^2$ \\\hline
$2$&$\mathbb{C}$&$3$&$-\frac{1}{8}+\frac{1}{8}(2+6 i)^2$ \\\hline
$-2$&$\mathbb{C}$&$3$&$-\frac{1}{8}+\frac{1}{8}(\frac{3}{2}+6 i)^2$ \\\hline
$-5$&$-\frac{49}{8}+\frac{1}{8}(\frac{10}{3}+10 i)^2$&$3$&$-\frac{1}{8}+\frac{1}{8}(\frac{6}{5}+6 i)^2$ \\\hline
$-5$&$-\frac{49}{8}+\frac{1}{8}(4+10 i)^2$&$3$&$-\frac{1}{8}+\frac{1}{8}(\frac{12}{5}+6 i)^2$ \\\hline
$-4$&$-\frac{9}{2}+\frac{1}{2}(\frac{4}{3}+4i)^2$&$4$&$-\frac{1}{2}+\frac{1}{2}(\frac{4}{3}+4 i)^2$ \\\hline
$-3$&$-\frac{25}{8}+\frac{1}{8}(2+6 i)^2$&$5$&$-\frac{9}{8}+\frac{1}{8}(\frac{10}{3}+10 i)^2$ \\\hline
$-3$&$-\frac{25}{8}+\frac{1}{8}(\frac{3}{2}+6 i)^2$&$5$&$-\frac{9}{8}+\frac{1}{8}(4+6 i)^2$ \\\hline
\end{tabular}\end{center}
\end{thm}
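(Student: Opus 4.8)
The plan is to apply Theorem \ref{thmmorales} (the Morales-Ramis-Simo criterion) to the homothetic orbit attached to the Darboux point $c$ of Definition \ref{darb}, and to reduce the resulting variational equation to a Gauss hypergeometric equation whose Galois group can be classified explicitly. First I would set up the homothetic solution $q(t)=\phi(t)c$, where $\phi$ solves the scalar energy relation of Definition \ref{darb}. Since $V$ is homogeneous of degree $k$, its Hessian is homogeneous of degree $k-2$, so along the orbit one has $\nabla^2 V(q(t))=\phi(t)^{k-2}\nabla^2 V(c)$. Diagonalizing the symmetric matrix $\nabla^2 V(c)$ then decouples the variational equation $\ddot{\xi}=-\nabla^2 V(q(t))\xi$ into independent scalar second-order equations
$$\ddot{\xi}=-\lambda\,\phi(t)^{k-2}\xi,\qquad \lambda\in\mathrm{Sp}(\nabla^2 V(c)).$$
Note that Euler's relation forces $c$ itself to be an eigenvector with the tangential eigenvalue $\lambda=k(k-1)$, which is the value obtained for $i=1$ in the first family of the table; the remaining (normal) eigenvalue in dimension $2$ is the one carrying genuine information.

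Next I would perform the classical change of independent variable dictated by the energy integral of $\phi$ (e.g.\ a variable proportional to $\phi^{k}$), which transforms each scalar equation into a Riemann P-equation with three regular singular points at $0$, $1$ and $\infty$, that is, a hypergeometric equation. A direct computation then expresses the three exponent differences as explicit algebraic functions of $k$ and $\lambda$: two of them depend only on $k$, coming from the branching of $\phi$, while the third depends on $\lambda$ through the square root of a linear expression in $\lambda$. This is the step where the precise normalization $\nabla V(c)=k\,c$ of Definition \ref{darb} enters, so I would keep careful track of the constants $\alpha$, $k_0$ to pin down the exponents.

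Finally, Theorem \ref{thmmorales} asserts that meromorphic integrability forces the identity component of the Galois group of this hypergeometric equation to be abelian over the base field. For a hypergeometric equation the identity component is abelian precisely in two situations: either the equation is reducible, with a triangular Galois group, which after solving the exponent relations for $\lambda$ yields the two infinite families parametrized by $i\in\mathbb{Z}^*$ in the first two rows of the table; or the exponent differences fall in the finite Schwarz/Kimura list (dihedral, tetrahedral, octahedral, icosahedral configurations), which after solving for $\lambda$ at each fixed $k$ produces exactly the finite collection of sporadic values recorded in the remaining rows. The degenerate degrees $k=0,\pm2$ would be handled separately: there the change of variables collapses and no constraint survives, which explains the entries $\lambda\in\mathbb{C}$ for $k=\pm2$.

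The hard part will be the passage from Kimura's classification, which characterizes when the identity component is merely \emph{solvable}, to the strictly stronger requirement that it be \emph{abelian}, and the consequent extraction of only those exponent-difference configurations compatible with commutativity. I also expect bookkeeping difficulties in confirming that the sporadic icosahedral and octahedral solutions do not coincide spuriously with members of the two infinite families, and in checking that each listed rational value of $\lambda$ indeed arises from an admissible, non-degenerate specialization of the exponents rather than from a boundary case of the reduction.
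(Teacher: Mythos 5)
The paper itself offers no proof of this theorem: it is imported verbatim from Morales--Ramis \cite{65} (with Combot \cite{91} supplying the extension to potentials on an algebraic variety $\mathcal{S}$), and your outline reconstructs exactly the argument of those references --- variational equation along the homothetic orbit, decoupling via the Hessian spectrum with the tangential eigenvalue $k(k-1)$ forced by Euler's relation, Yoshida's change of variable $z$ proportional to $\phi^{k}$ producing a hypergeometric equation whose exponent differences are $1/2$, $1/k$ and $\sqrt{(k-2)^2+8k\lambda}\,/(2k)$, and finally the Kimura/Schwarz classification sharpened from solvable to abelian identity component. Two small inaccuracies in your sketch, neither fatal to the outline: the imprimitive (infinite dihedral) case also has abelian identity component --- its identity component is a torus --- so it contributes an infinite family rather than belonging to the ``finite'' list, and it is precisely this case (exponent differences $1/2$ and $1/k=-1/2$ both half-integers for every $\lambda$) that explains the unconstrained entry $k=-2$, whereas the change of variable does not ``collapse'' there; only $k=2$ degenerates, because then $\phi^{k-2}$ is constant and the variational equation has constant coefficients. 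You also implicitly assume $\nabla^2 V(c)$ is diagonalizable, which a complex symmetric matrix need not be (compare the $c_1^2+c_2^2=0$ discussion elsewhere in the paper); the classical statement handles or excludes this degeneracy, so it deserves a sentence in a complete write-up.
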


The main difficulty in this Theorem is to compute the Darboux points, and then the eigenvalues at Darboux points. In particular, we obtain for each Darboux point of $V$ several conditions. So when studying families of homogeneous potentials, there will be for some particular cases fewer Darboux points than in the generic case. This happens for example when two Darboux points fuse together to give only one Darboux point.

\subsection{Multiple Darboux points}

As the Morales-Ramis Theorem gives necessary condition for integrability for each Darboux point, having fewer Darboux points increases the ``chances'' of satisfying these conditions. So, the multiple Darboux point case appears to be a special case which will lead to integrable potentials.

\begin{defi}\label{def1}
Let $V$ be a meromorphic homogeneous potential of degree $k$ on $\Omega \subset \mathcal{S}$. We consider the functions
$$G_i(q)=\frac{\partial V}{\partial q_i }(q)-k q_i \quad i=1\dots n$$
We say that $c$ is a \emph{multiple Darboux point} of $V$ if $G_i(c)=0,\;i=1\dots n$ and the rank of the Jacobian matrix of the application $q\mapsto (G_1(q),\dots,G_n(q))$ is not maximal at $c$.
\end{defi}

This is the classical definition of a multiple solution for the system of equations $(G_1(q),\dots,G_n(q))=0$. In this article, we are mainly interested by the application of the Morales-Ramis-Simo Theorem for multiple Darboux points. As we will see, assuming that the Darboux point is multiple has consequences on the possible eigenvalues at such a Darboux point.

\begin{prop}\label{prop1}
Let $c$ be a multiple Darboux point of a meromorphic homogeneous potential $V$. Then $k\in \hbox{Sp}(\nabla^2V(c))$.
\end{prop}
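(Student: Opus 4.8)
The plan is to observe that the Jacobian matrix appearing in Definition \ref{def1} is, up to a constant diagonal shift, exactly the Hessian of $V$ at $c$. Writing $G_i = \partial_{q_i} V - k q_i$, the $(i,j)$ entry of the Jacobian of $q \mapsto (G_1,\dots,G_n)$ is
$$\frac{\partial G_i}{\partial q_j} = \frac{\partial^2 V}{\partial q_j \partial q_i} - k\,\delta_{ij}.$$
Since mixed partial derivatives commute, this matrix is symmetric and equals $\nabla^2 V - k\,\mathrm{Id}_n$. I would stress that these are the total derivatives on $\mathcal{S}$, treating the $w$ as the algebraic functions of $q$ described after Definition \ref{darb}; but $V$ restricted to a local $q$-parametrization of $\mathcal{S}$ is an honest function, so its second-order $q$-derivatives are symmetric and coincide with the entries of $\nabla^2 V(c)$ as understood in the statement.

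Next I would translate the rank hypothesis into a determinantal condition. By definition, $c$ being a \emph{multiple} Darboux point means the Jacobian of $G$ fails to have maximal rank $n$ at $c$, i.e. the $n\times n$ matrix $\nabla^2 V(c) - k\,\mathrm{Id}_n$ is singular. Hence
$$\det\bigl(\nabla^2 V(c) - k\,\mathrm{Id}_n\bigr) = 0.$$
But this is precisely the characteristic polynomial of $\nabla^2 V(c)$ evaluated at $\lambda = k$, so $k$ is one of its roots; equivalently $k \in \mathrm{Sp}(\nabla^2 V(c))$, which is the claim.

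There is essentially no analytic obstacle here: the content of the Proposition is the elementary identification of the defining Jacobian of the Darboux-point system with the shifted Hessian. The only point requiring a little care — and the place I would be most careful — is the bookkeeping of the derivations on the algebraic variety $\mathcal{S}$, making sure that the $q$-Hessian computed with the $w$ treated as functions of $q$ is genuinely symmetric and matches $\nabla^2 V(c)$. Once this is granted, the equivalence between non-maximal rank, vanishing determinant, and $k$ being an eigenvalue is immediate and dimension-independent, in particular it holds in the $n=2$ case relevant to Theorem \ref{thmmain1}.
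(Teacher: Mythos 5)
Your proof is correct and follows essentially the same route as the paper: both identify the Jacobian of $q\mapsto(G_1,\dots,G_n)$ with the shifted Hessian $\nabla^2 V(c)-kI_n$ and conclude from the non-maximal rank that $k$ is an eigenvalue (the paper exhibits a kernel vector where you invoke the vanishing determinant, a trivial difference). Your extra care about the derivations on $\mathcal{S}$ and the symmetry of the Hessian is a reasonable refinement of a point the paper leaves implicit.
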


\begin{proof}
Using definition \ref{def1}, at a multiple Darboux point, the Jacobian matrix of the application $q\mapsto (G_1(q),\dots,G_n(q))$ has not a maximal rank. Computing this Jacobian matrix, it turns out to be $\nabla^2 V(c) -kI_n$. This matrix has not maximal rank, and thus has a non-zero vector $v$ in its kernel. Thus $v$ is an eigenvector of $\nabla^2 V(c)$ of eigenvalue $k$.
\end{proof}

So in particular, one of the integrability condition of Theorem  \ref{thmmorales} is automatically satisfied. In dimension $n$, this property on the spectrum is useful and important in the computations of higher variational equations, but still not enough a priori to completely classify such integrable potentials. Almost all non-integrability problems of homogeneous potentials is of the form\\

\noindent
\textbf{Problem}: Given a set $E$ of homogeneous potentials of degree $k$, find all elements in $E$ which are meromorphically integrable.
\bigskip

In this article, the set $E$ is the set of homogeneous potentials with a multiple Darboux point. As presented in \cite{10}, the difficulty to solve such a problem is closely related to the ``eigenvalue bounded'' property. Let us now note
$$\mathcal{M}=\left\lbrace V \hbox{ holomorphic on } \Omega \hbox{ homogeneous of degree } k \right\rbrace$$
Let $V\in\mathcal{M}$. We note $d(V)$ the set of Darboux points $c\in \Omega\setminus \{0\}$ of $V$. Given $c\in d(V)$, the spectrum of the Hessian matrix $\nabla^2 V(c)$ always contains the eigenvalue $k(k-1)$ because of the relation $\nabla^2 V(c)c=k(k-1)c$ (due to Euler relation for homogeneous functions of degree $k$). So we have $\hbox{Sp}\left(\nabla^2 V(c)\right)=\{k(k-1),\lambda_2,\dots,\lambda_n\}$ and we note
$$\Lambda(c)=\left\lbrace\begin{array}{c} \;\;\max(\lambda_2,\dots,\lambda_n)\quad\hbox{ if } \lambda_2,\dots,\lambda_n\in\mathbb{R} \\ -\infty\;\; \hbox{ otherwise}\\ \end{array}\right. $$

We consider $E\subset \mathcal{M}$ a subset of $\mathcal{M}$ and we define the following
$$\Lambda(E)= \sup\limits_{V\in E,\; d(V)\neq \varnothing} \inf\limits_{c \in d(V)} \Lambda(c)$$

In \cite{10}, the author proves a complete classification of homogeneous potentials of degree $-1$ in the plane assuming that $\Lambda <27$. The fact that $\Lambda(E)$ is finite is thus very important.

In the case of homogeneous potentials with multiple Darboux points, thanks to Proposition \ref{prop1}, we automatically obtain that in dimension $2$, $\Lambda(E)\leq k$ and thus is finite. In higher dimension, this property does not hold any more as there are other eigenvalues which could become arbitrary large.

Still there is an equivalent in higher dimension which still satisfy this eigenvalue bound. It corresponds to a more degenerate case, where the Jacobian matrix of Definition \ref{def1} is of rank $1$ (not only of rank $\leq n-1$). This case corresponds to the fusion of $n$ Darboux points in a generic terminal configuration.  As this case is less natural and more complicated, we will restrict ourselves for now in this article to the $2$-dimensional case.

\subsection{Motivations}

Let us explain why studying such a particular case is relevant. The first motivation is the application on real analytic potentials of negative degree. The second motivation is the Maciejewski-Pryzbylska relation \cite{4,25,97}. In the case of a planar rational homogeneous potential, the eigenvalues at a Darboux point $c$ are of the form $\{k(k-1),\lambda\}$. There exists generically a relation on eigenvalues (see \cite{97}) of the form
$$\sum\limits_{i=1}^p \frac{1}{\lambda_i-k}=c$$
where $c$ only depends on multiplicities of some roots of $V$ and $p$ is the number of Darboux points. Still this relation only holds when the Darboux points are simple. Indeed, if there is a multiple Darboux point, then $\lambda=k$ and this leads to a singularity in this relation. This relation is the key ingredient for classification of homogeneous potentials like in \cite{3,4,102}, and thus a difficult separate analysis of this case is necessary. The Theorem \ref{thmmain1} then allows to remove nicely this particular case. This always leads to the potential invariant by rotation.

\section{Theorem \ref{thmmain1} implies Theorem \ref{thmmain2}}

 Let us now prove that the Theorem \ref{thmmain2} is directly implied by Theorem \ref{thmmain1}.

\begin{proof}
A planar real analytic potential on $\mathbb{R}^2\setminus\{0\}$ can be written in polar coordinates $V=r^k U(\theta)$. A Darboux point $c=(r_0\cos \theta_0,r_0\sin\theta_0)$ of $V$ corresponds now to a critical point of $U$. The hypothesis $U(\mathbb{R})\subset \mathbb{R}$ implies that $U$ is $C^\infty$ on $\mathbb{R}$. The function $U$ is periodic, so there exists a minimum and a maximum for $U$. Assume first that $U$ is not constant. Then $\max U > \min U$. We have $3$ cases
\begin{itemize}
\item $\max U \geq \min U \geq 0$. Then we choose $\theta_0$ such that $U(\theta_0)= \max U$
\item $\max U \geq 0 \geq \min U$. If $\max U\neq 0$, we choose $\theta_0$ such that $U(\theta_0)= \max U$, otherwise we choose $\theta_0$ such that $U(\theta_0)= \min U$
\item $0 \geq \max U \geq \min U$. We choose then $\theta_0$ such that $U(\theta_0)= \min U$
\end{itemize}
Knowing that $\max U > \min U$, we get $U(\theta_0) \neq 0$. Then in all cases, we have
$$\frac{U''(\theta_0)}{U(\theta_0)} \leq 0$$
Knowing that $\theta_0$ is an extremum, we get
$$U(\theta_0)\neq 0 \quad U'(\theta_0)=0 \quad \frac{U''(\theta_0)}{U(\theta_0)} \leq 0$$

We define $c\in\mathbb{R}^2\setminus\{0\}$ by
$$c_1 = U(\theta_0)^{1/(2-k)} \cos\theta_0, \quad  c_2 = U(\theta_0)^{1/(2-k)} \sin\theta_0$$
After computation, we find that $c$ satisfies the equation
$$\partial_{q_1} V(c)=kc_1\qquad \partial_{q_2} V(c)=kc_2$$
So $c$ is a Darboux point of $V$. We now compute the eigenvalues of the Hessian $\nabla^2V(c)$, and we find
$$\hbox{Sp}(\nabla^2V(c))=\left\lbrace k(k-1),\frac{U''(\theta_0)}{U(\theta_0)}+k \right\rbrace $$
We have moreover that
$$\frac{U''(\theta_0)}{U(\theta_0)} \leq 0$$
and thus this second eigenvalue is less than $k$. If $V$ is meromorphically integrable than, thanks to Theorem~\ref{thmmorales}, the eigenvalues at Darboux points should belong to the Morales-Ramis table. Looking now precisely at it, we find out that when $k<0,\;k\neq -2$, the only possible eigenvalue allowed for meromorphic integrability less than $k$ is $k$ itself. So this implies that in fact
$$\hbox{Sp}(\nabla^2 V(c))=\{k(k-1),k\}$$
and $U''(\theta_0)=0$. This vanishing second order derivative implies (and in fact equivalent to) that $c$ is a multiple Darboux point. Theorem \ref{thmmain1} gives us that the only integrable homogeneous potential of degree $k$ with a multiple Darboux point is of the form $V=a(q_1^2+q_2^2)^{k/2}$. This potential (when $a\in\mathbb{R}$) is real analytic on $\mathbb{R}^2\setminus \{0\}$, and thus satisfy the hypothesis of Theorem \ref{thmmain2}. Thus
$$V=a(q_1^2+q_2^2)^{k/2},\;\; a\in\mathbb{R}$$
This case is effectively meromorphically integrable, as the angular momentum is a first integral of this potential. The case $k=-2$ is not analysed, but we do not need to. All planar homogeneous potentials of degree $-2$ are meromorphically integrable. This gives the Theorem.
\end{proof}

The proof is very similar to the one in \cite{10}, which proved the subcase $k=-1$ of Theorem \ref{thmmain2}, and also implies Theorem \ref{thmmain1} for $k=-1$.

\section{Proof of Theorem \ref{thmmain1}}

\subsection{Reduction by rotation-dilatation}

\begin{lem}
Let $V$ be a meromorphic homogeneous potential of degree $k\neq 0,2$ on $\Omega$ in dimension $2$. Assume there exists $c\in\Omega$ a non-degenerate Darboux point of $V$, with $c_1^2+c_2^2\neq 0$. Then after a rotation and dilatation, the potential $V$ has the following properties
\begin{itemize}
\item There exists a non-degenerate Darboux point of the form $c=(1,0,\dots)$.
\item We have $\hbox{Sp}(\nabla^2 V(c))=\{k(k-1),\lambda\}$, and the series expansion of $V$ at $c$ is of the form
$$V(c+q)=1+kq_1+\textstyle{\frac{1}{2}}k(k-1)q_1^2+\lambda q_2^2/2 +O(q^3)$$
\end{itemize}
\end{lem}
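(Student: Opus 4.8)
The plan is to use the two--parameter group of rotation--dilatations acting on homogeneous potentials of degree $k$, which preserves both the degree and meromorphic integrability, in order to move the given non-degenerate Darboux point into the position $(1,0)$ and at the same time normalise $V$ there to the value $1$. Following the convention of the previous section I treat $V$ as a (multivalued) function of $q_1,q_2$ on $\mathcal S$, the $w$ being algebraic functions of $q$, so that the Euler relation $q_1\partial_{q_1}V+q_2\partial_{q_2}V=kV$ and the Darboux condition $\partial_{q_i}V(c)=kc_i$ hold as stated.

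First I would use a rotation $R\in SO(2,\mathbb C)$. Since $(V\circ R)(q)=V(Rq)$ is again homogeneous of degree $k$ with $\nabla(V\circ R)=R^{T}(\nabla V)\circ R$, a short computation shows that $R^{-1}c$ is a Darboux point of $V\circ R$ and that the Jacobian of $G$ at $R^{-1}c$ equals $R^{T}(\nabla^2V(c)-kI)R$, so non-degeneracy is preserved. Because rotations preserve the form $q_1^2+q_2^2$ and $c_1^2+c_2^2\neq 0$, I can choose $R$ with $R^{-1}c=(r,0)$, where $r^2=c_1^2+c_2^2\neq 0$. This is precisely where the hypothesis $c_1^2+c_2^2\neq 0$ enters: an isotropic $c$ could not be aligned with the axis.

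Next I would apply the dilatation $\tilde V(q)=\mu^{-2}V(\mu q)$, again homogeneous of degree $k$. Using $\nabla V((r,0))=k(r,0)$ and the degree $k-1$ homogeneity of $\nabla V$, one checks that $(r,0)/\mu$ is a Darboux point of $\tilde V$, so the choice $\mu=r$ puts it at $(1,0)$; the exponent $-2$ is exactly what makes $\nabla^2\tilde V((1,0))=\nabla^2 V((r,0))$, so the spectrum $\{k(k-1),\lambda\}$ and non-degeneracy are unchanged. The point I expect to require the most care is the compatibility of the two normalisations: evaluating Euler at $c$ gives $k\,V(c)=\sum_i c_i\partial_{q_i}V(c)=k(c_1^2+c_2^2)$, hence $V(c)=r^2$ (here $k\neq 0$ is used), so the same $\mu=r$ also yields $\tilde V((1,0))=r^{-2}V((r,0))=1$. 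Thus a single one-parameter dilatation simultaneously fixes the position and the value, which is the only substantive step.

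Finally, renaming $\tilde V$ as $V$ and writing $c=(1,0)$, I would read off the expansion. Differentiating the Euler relation once and evaluating at the Darboux point gives $\nabla^2V(c)\,c=k(k-1)c$, so $(1,0)$ is an eigenvector for the eigenvalue $k(k-1)$; since the Hessian is symmetric this forces $\partial_{q_1q_1}V(c)=k(k-1)$ and $\partial_{q_1q_2}V(c)=0$, leaving $\lambda=\partial_{q_2q_2}V(c)$ as the second eigenvalue. Substituting $V(c)=1$, $\partial_{q_i}V(c)=kc_i$ and this diagonal Hessian into the second-order Taylor expansion of $V$ at $c$ yields
$$V(c+q)=1+kq_1+\tfrac{1}{2}k(k-1)q_1^2+\tfrac{1}{2}\lambda q_2^2+O(q^3),$$
as claimed. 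The remaining verifications are routine; the content of the lemma is that the dilatation normalises position and value at once, and that the Euler relation pins the $k(k-1)$ eigendirection to $(1,0)$ and thereby diagonalises the Hessian.
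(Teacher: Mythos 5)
Your proposal is correct and takes essentially the same approach as the paper: a rotation aligning $c$ with the $q_1$-axis (using $c_1^2+c_2^2\neq 0$), followed by a scaling --- your $\tilde V(q)=\mu^{-2}V(\mu q)$ is, by homogeneity, exactly the paper's ``multiply $V$ by a constant'' ($\mu^{k-2}V$, non-trivial since $k\neq 2$) --- and then the Euler relation used twice, once to get $V(c)=1$ and once differentiated to get $\nabla^2V(c)\,c=k(k-1)c$, which diagonalises the Hessian and yields the stated expansion. Your explicit verifications (preservation of non-degeneracy under rotation and dilatation, and the compatibility $V(c)=r^2$ making one dilatation normalise position and value simultaneously) are details the paper leaves implicit, not a different method.
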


\begin{proof}
As there exists a non-degenerate Darboux point $c$, we can assume that $c=(\gamma,0,\dots)$ after a rotation (recall that $c_1^2+c_2^2\neq 0$). Then multiplying $V$ by a constant, we can put $\gamma=1$ (thanks to this hypothesis $k\neq 2$). Using Euler formula for homogeneous functions at $c$, we obtain that $V(c)=1$ and also $\partial_{q_1}V(c)=kV(c)$.

Differentiating the Euler relation and evaluating it at $(q_1,q_2)=c$, we also have
$$\partial_{q_1}V(c)+\partial_{q_1q_1}V(c)=k\partial_{q_1}V(c),\qquad \partial_{q_1q_2}V(c)+\partial_{q_2}V(c)=k\partial_{q_2}V(c) $$
Thus
$$\nabla^2 V(c) c= \left(\begin{array}{c}\partial_{q_1q_1}V(c)\\ \partial_{q_1q_2}V(c)  \end{array} \right)= \left(\begin{array}{c} (k-1)\partial_{q_1}V(c)\\ (k-1)\partial_{q_2}V(c)  \end{array} \right)=k(k-1)c$$
So the eigenvalue $k(k-1)$ always appear in the spectrum. So we can write $\hbox{Sp}(\nabla^2 V(c))=\{k(k-1),\lambda\}$. The series expansion of $V$ at $c$ follows.
\end{proof}

So, outside the case $c_1^2+c_2^2=0$, we can always assume that the Darboux point is of the form $c=(1,0,\dots)$. Let us now look more closely to what happens when $c_1^2+c_2^2=0$. The vector $c$ is still an eigenvector of $\nabla^2 V(c)$, with eigenvalue $k(k-1)$. Now according to \cite{72}, if $\nabla^2 V(c)$ is diagonalizable, then it has an orthonormal basis of eigenvectors. Thus the eigenvalue $k(k-1)$ is multiple in all cases (either $\nabla^2 V(c)$ is diagonalizable and then the eigenspace associated to eigenvalue $k(k-1)$ is of dimension $2$, or it is non-diagonalizable, and thus has a double eigenvalue). We conclude that the only possible spectrum in this case is $\{k(k-1),k(k-1)\}$, and thus this case cannot correspond to a multiple Darboux point due to Proposition \ref{prop1}.

So, if $c$ is a multiple Darboux point, we can always assume after rotation-dilatation that it is of the form $c=(1,0,\dots)$.

\subsection{Variational equations}

As we will see, the case $k=-1$ is a special case, and we will not analyze it here. This work has already been done by this author in \cite{10}. This article introduce in particular a non-degeneracy property, that we will use to prove Theorem \ref{thmmain1}. It happens that we are lucky, as the case $k=-1$ reveals to be the hardest case for $k<0,\; k\neq -2$. Indeed, we will find out that the non-degeneracy property will be satisfied for all $k$ except $k=-1$. The article \cite{10} has indeed required a more precise analysis of higher variational equations to conclude.

The first order variational equation can be written after a suitable variable change
$$\frac{1}{2} k^2\left( t^2-1\right) \ddot{X}+k\left(k- 1\right)t \dot{X}  -k X=0 $$
The solutions of this equation can be written
$$P_k=(t^2-1)^{1/k} \qquad Q_k=P_k \int (t^2-1)^{-(k+1)/k} dt$$
The function $Q_k$ is transcendental for $k\neq -2,-1,0,2$, and is in general an hyperelliptic integral. The case $k=-1$ is special (and more difficult) and has already been treated in \cite{10}.

We now follow the definition of higher variational equations given by Morales-Ramis-Simo \cite{2} page 860. Using their notation, the variational equations can be written
$$\dot{\varphi}_t^{(1)}=X_H^{(1)}\varphi_t^{(1)}$$
$$\dot{\varphi}_t^{(2)}=X_H^{(1)}\varphi_t^{(2)}+X_H^{(2)}(\varphi_t^{(1)})^2$$
$$\dot{\varphi}_t^{(3)}=X_H^{(1)}\varphi_t^{(3)}+2X_H^{(2)}(\varphi_t^{(1)},\varphi_t^{(2)})+X_H^{(3)}(\varphi_t^{(1)})^3$$
and they give a formula for any order $l$. In particular, at any order $l$, the last equation has always the following structure. There is a homogeneous part $\dot{\varphi}_t^{(l)}=X_H^{(1)}\varphi_t^{(l)}$, and non homogeneous terms involving functions already computed when solving lower order variational equations. So this last equation can be considered as a non homogeneous linear  equation.

We still assume that we are in the homogeneous potential case, with a Darboux point of the form $c=(1,0,\dots)$. The $X_H$ is the Hamiltonian field, and we may write $\varphi_t^{(l)}=(\dot{X}_1,\dot{X}_2,X_1,X_2)$. The $X_1$ correspond to a perturbation tangential to the homothetic orbit, and $X_2$ normal to this orbit. We see also that this variational equation is not linear. But for example at order $3$, instead of considering non linear terms like $(\varphi_t^{(1)})^3$, we replace it by solutions of the symmetric power of the equation satisfied by $\varphi_t^{(1)}$ (for this term, this gives the third symmetric power of the first order variational equation).

Computing variational equations up to order $l$ will produce monomials in the components of vectors $\varphi_t^{(1)},\dots,\varphi_t^{(l)}$. Equation $(13)$ of \cite{2} can be rewritten
$$\dot{\varphi}_t^{(l)}= \sum\limits_{j=1}^k \sum \frac{j!}{m_1! \dots m_s!} X_H^{(j)}((\varphi_t^{(i_1)})^{m_1},\dots,(\varphi_t^{(i_s)})^{m_s})$$
For each fixed $j$, the inner sum is a sum monomials of the form
\begin{equation}\label{mono}
(\varphi_t^{(1)})_{w_1}^{j_1}\dots (\varphi_t^{(l)})_{w_l}^{j_l}
\end{equation}
where $w$ indicates the component of vectors $\varphi_t$. Instead of computing $\varphi_t^{(i)}$, we compute directly these monomials. We note $y_{n_1,n_2,n_3,n_4}$ the sum over all monomials \eqref{mono} having exactly $n_1$ terms with $w=1$, $n_2$ terms with $w=2$, etc. Due to symmetries of higher variational equations, considering these $y_{n_1,n_2,n_3,n_4}$ are sufficient to analyze the variational equation (meaning that the derivatives of $y$ only involve $y$). This process has also linearized the variational equation as $y_{n_1,n_2,n_3,n_4}$ correspond to the monomials in the sum themselves. Building linear differential equations for the $y_{n_1,n_2,n_3,n_4}$ necessitates by the way to compute the symmetric product of differential systems (as done \cite{64}), as we need to build linear differential equation satisfied by monomials of the form \eqref{mono}. At order $l$, the variational equation now writes
\begin{equation}\label{nondeg}
\!\left(\begin{array}{c}\ddot{y}_{0,0,1,0}\\ \ddot{y}_{0,0,0,1} \end{array}\right)=\phi^{k_0(k-2)} \left(\begin{array}{c}k(k-1)y_{0,0,1,0}\\ \lambda y_{0,0,0,1} \end{array}\right) +\left(\begin{array}{c}
\sum \limits_{i=2}^{l} \phi^{k_0(k-1-i)}\sum\limits_{j=0}^i \frac{d_{i,j}}{(i-j)!j!} y_{0,0,i-j,j}\\ \sum \limits_{i=2}^{l} \phi^{k_0(k-1-i)}\sum\limits_{j=0}^i \frac{d_{i,j+1}}{(i-j)!j!} y_{0,0,i-j,j} \\ \end{array}\right) \!\!\!
\end{equation}
where $y_{i,0,j,0}$ satisfy differential equations corresponding to lower order variational equations. The coefficients $d_{i,j}$ are given by
$$d_{i,j}=\frac{\partial^{i+1}}{\partial q_1^{i-j+1} \partial q_2^{j}} V(c)$$

A visual process to build these differential systems is to see $y_{n_1,n_2,n_3,n_4}$ as $\dot{X_1}^{n_1}\dot{X_2}^{n_2} X_1^{n_3} X_2^{n_4}$. We differentiate this expression and simplify it using the relation
\begin{equation}\label{nondeg2}
\ddot{X}=\phi^{k_0(k-2)} \left(\begin{array}{cc}k(k-1)&0\\ 0&\lambda \end{array}\right)  X +\left(\begin{array}{c}
\sum \limits_{i=2}^{l} \phi^{k_0(k-1-i)}\sum\limits_{j=0}^i \frac{d_{i,j}}{(i-j)!j!} X_1^{i-j}X_2^{j}\\ \sum \limits_{i=2}^{l} \phi^{k_0(k-1-i)}\sum\limits_{j=0}^i \frac{d_{i,j+1}}{(i-j)!j!} X_1^{i-j}X_2^{j} \\ \end{array}\right)
\end{equation}
We suppress terms degree $>k$ that could appear, and then we formally replace back the $\dot{X_1}^{n_1}\dot{X_2}^{n_2} X_1^{n_3} X_2^{n_4}$ by $y_{n_1,n_2,n_3,n_4}$.

\begin{rem}\label{remhom}
Remark now that using the Euler relation for homogeneous function
$$q_1 \partial_{q_1}V+q_2 \partial_{q_2}V=kV$$
and differentiating it in $q_1$ or $q_2$ enough at $(q_1,q_2)=(1,0)$, we obtain the relations
$$\partial_{q_1^i q_2^j}V + \partial_{q_1^i q_2^j}V +\partial_{q_1^{i+1} q_2^j}V=k\partial_{q_1^i q_2^j}V,\quad i\geq 1,j\geq 0$$
This gives all derivatives $d_{k,j}$ of $V$ of order $l+1$ in function of lower order ones except $d_{l,l+1}$.
\end{rem}

By construction, the differential equations for the $y_{n_1,n_2,n_3,n_4}$ have special structure. In particular, the expression of $\dot{y}_{n_1,n_2,n_3,n_4}$ only involve terms with higher or equal sum of indexes. Thus, in particular, the differential equation for $y_{n_1,n_2,n_3,n_4},\;n_1+n_2+n_3+n_4=l$ is linear homogeneous and correspond to the $k$-th symmetric power of the first order variational equation. So the $y_{n_1,n_2,n_3,n_4},\;n_1+n_2+n_3+n_4=l$ are linear combinations of product of degree $l$ of solutions of the first order variational equation, which will be in our case after a variable change products of degree $l$ functions $P,Q$.

\subsection{Lemmas about monodromy}

Our main tool will be higher variational equation and the non degeneracy procedure of \cite{10}, in particular Theorem $8$. We still need to compute the monodromy of some functions appearing in higher variational equations. The monodromy analysis corresponds to the subsequent Lemmas.

\begin{lem}\label{lem1}
Let $\alpha,\beta\in\mathbb{Q}$ two rational numbers. Let $j\in\mathbb{Z}$ be an integer and $\gamma_j$ the closed path turning $j$ times around $1$ counter-clockwise and then $j$ times around $-1$ clockwise. If the function 
$$G=\int (t^2-1)^\alpha \int (t^2-1)^\beta dtdt$$
has a commutative monodromy, then the following matrix
$$A=\left(\begin{array}{cc}
\int\limits_{\gamma_{j_1}} (t^2-1)^\alpha dt&\int\limits_{\gamma_{j_2}} (t^2-1)^\alpha dt\\
\int\limits_{\gamma_{j_1}} (t^2-1)^\beta  dt&\int\limits_{\gamma_{j_2}} (t^2-1)^\beta  dt 
\end{array}\right) $$
has a zero determinant for any $j_1,j_2\in\mathbb{Z}$.
\end{lem}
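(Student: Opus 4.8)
The plan is to realize $G$ as a solution of a third-order Fuchsian equation with rational coefficients and to read the determinant condition off the commutator of two monodromy matrices. Write $f=(t^2-1)^\alpha$, $g=(t^2-1)^\beta$, and set $F=\int f\,dt$, $H=\int g\,dt$, so that $G=\int fH\,dt$. A direct computation gives $G'=fH$ and, using $f'/f=2\alpha t/(t^2-1)$,
$$G''-\frac{2\alpha t}{t^2-1}\,G'=fg=(t^2-1)^{\alpha+\beta}.$$
Since the right-hand side is a solution of the first-order equation $y'=\frac{2(\alpha+\beta)t}{t^2-1}\,y$, applying the operator $\frac{d}{dt}-\frac{2(\alpha+\beta)t}{t^2-1}$ annihilates it and shows that $G$ satisfies a third-order linear ODE with rational coefficients whose solution space is exactly $\mathrm{span}\{1,F,G\}$. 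First I would record that this space carries the monodromy-invariant flag $\langle 1\rangle\subset\langle 1,F\rangle\subset\langle 1,F,G\rangle$, so that in the basis $(1,F,G)$ every monodromy matrix is upper triangular; this is the structural fact that makes the argument work.

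Next I would compute the monodromy along $\gamma_j$. The key point is that $\gamma_j=\tau_1^{\,j}\tau_{-1}^{-j}$, where $\tau_{\pm1}$ denotes a small positive loop about $\pm1$; since $f$ and $g$ each pick up the factor $e^{2\pi i\alpha}$, resp. $e^{2\pi i\beta}$, about either point, the winding around $+1$ and the opposite winding around $-1$ cancel, so $f$ and $g$ return to themselves along $\gamma_j$. Hence the periods $A_j:=\int_{\gamma_j}f\,dt$ and $B_j:=\int_{\gamma_j}g\,dt$ are well-defined numbers. Continuing $F$ and $G$ along $\gamma_j$ then gives $\sigma_j(F)=F+A_j$ and, from $\sigma_j(G)'=\sigma_j(f)\,\sigma_j(H)=f(H+B_j)=G'+B_jF'$, the relation $\sigma_j(G)=G+B_jF+c_j$ for some constant $c_j$. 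In the basis $(1,F,G)$ this is the unipotent Heisenberg-type matrix
$$M_{\gamma_j}=\begin{pmatrix}1&A_j&c_j\\ 0&1&B_j\\ 0&0&1\end{pmatrix}.$$

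Finally I would exploit commutativity. A short computation shows that for two such matrices the commutator $M_{\gamma_{j_1}}M_{\gamma_{j_2}}-M_{\gamma_{j_2}}M_{\gamma_{j_1}}$ has a single nonzero entry, in the top-right corner, equal to $A_{j_1}B_{j_2}-A_{j_2}B_{j_1}$. Since $\gamma_{j_1},\gamma_{j_2}$ are genuine elements of $\pi_1(\mathbb{C}\setminus\{\pm1\})$, the hypothesis that $G$ has commutative monodromy forces $M_{\gamma_{j_1}}$ and $M_{\gamma_{j_2}}$ to commute, whence $A_{j_1}B_{j_2}=A_{j_2}B_{j_1}$, that is $\det A=0$, as claimed. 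The main obstacles here are two bookkeeping points rather than deep ones: verifying that the monodromy-closure of $G$ is exactly the three-dimensional space $\mathrm{span}\{1,F,G\}$, so that no extra companion functions enlarge the representation and break the upper-triangular form; and checking that the undetermined corner constants $c_j$ cancel in the commutator, so that only the off-diagonal periods $A_j,B_j$ enter the obstruction.
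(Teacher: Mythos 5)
Your proof is correct and takes essentially the same route as the paper's: both establish that each loop $\gamma_j$ acts unipotently on the flag spanned by $1$, $\int(t^2-1)^\alpha\,dt$, and $G$ (shifting $F$ by the period $A_j$ and $G$ by $B_jF+\mathrm{const}$), and both observe that the commutator of two such monodromies moves $G$ by exactly the determinant of the period matrix $A$. The only difference is packaging: the paper carries out the group commutator $\sigma_{j_2}^{-1}\sigma_{j_1}^{-1}\sigma_{j_2}\sigma_{j_1}$ directly on the functions, while you encode the same data in $3\times 3$ Heisenberg matrices attached to a third-order ODE, which is equivalent.
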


Here the monodromy of the function $G$ should be understand has over the Riemann surface associated to $(t^2-1)^\beta,(t^2-1)^\alpha$. Indeed, these two functions are algebraic, thus define a Riemann surface $\mathcal{W}$ on which they are well defined and univalued. The path $\gamma_j$ we have chosen lives on this surface $\mathcal{W}$, and is a closed curve on this surface. Hence, the monodromy group is defined over the closed paths of $\mathcal{W}$.

Considering the Riemann surface $\mathcal{W}$ is necessary, as the integrability constraint on Galois group only concerns its identity component. By doing this construction, we ensure that the Galois group of $\mathbb{C}(t,G)$ over $\mathbb{C}(t,(t^2-1)^\beta,(t^2-1)^\alpha)$ is connected, and is the Zariski closure of the monodromy group we have defined above.

\begin{proof}
Let us note $\sigma_j$ the monodromy operator associated to the path $\gamma_j$. We first compute
$$\sigma_j\left( \int (t^2-1)^\alpha dt \right)= \int (t^2-1)^\alpha dt +\int\limits_{\gamma_j} (t^2-1)^\alpha dt$$
$$\sigma_j\left( \int (t^2-1)^\beta dt \right)= \int (t^2-1)^\beta dt +\int\limits_{\gamma_{j_1}} (t^2-1)^\beta dt$$
Let us now look at the action of these elements on $G$ for $j=j_1$ and $j=j_2$.
\begin{align*}
\sigma_{j_1}(G)'=\sigma_{j_1}\left((t^2-1)^\alpha \int (t^2-1)^\beta dt\right)=(t^2-1)^\alpha \left(\int (t^2-1)^\beta dt+A_{2,1}\right)\\
\Leftrightarrow\quad \exists K_1\in\mathbb{C} \quad \sigma_{j_1}(G)= G+ A_{2,1} \int (t^2-1)^\alpha dt +K_1
\end{align*}
For $j_2$, we get a similar formula
$$\exists K_2\in\mathbb{C} \quad \sigma_{j_2}(G)= G+ A_{2,2} \int (t^2-1)^\alpha dt +K_2$$
For the reciprocal, we get also the following formula
$$\sigma_{j_1}^{-1}(G)=G- A_{2,1} \left(\int (t^2-1)^\alpha dt-A_{1,1}\right) -K_1$$
$$\sigma_{j_2}^{-1}(G)=G- A_{2,2} \left(\int (t^2-1)^\alpha dt-A_{1,2}\right) -K_2$$
Let us now compute the action on $G$ of the commutator of these two monodromy elements
\begin{align*}
\sigma_{j_2}\sigma_{j_1}(G)=\\
G+ A_{2,2} \int (t^2-1)^\alpha dt+K_2+ A_{2,1} \left(\int (t^2-1)^\alpha dt+A_{1,2}\right) +K_1=\\
G+ (A_{2,2}+A_{2,1}) \int (t^2-1)^\alpha dt+K_2+ A_{2,1}A_{1,2} +K_1
\end{align*}
\begin{align*}
\sigma_{j_1}^{-1}\sigma_{j_2}\sigma_{j_1}(G)=G+A_{2,2} \left(\int (t^2-1)^\alpha dt-A_{1,1} \right)+K_2+ A_{2,1}A_{1,2}\\
\sigma_{j_2}^{-1}\sigma_{j_1}^{-1}\sigma_{j_2}\sigma_{j_1}(G)=G-A_{1,1}A_{2,2}+ A_{2,1}A_{1,2}
\end{align*}
So the commutator acts trivially if and only if the determinant of $A$ is zero.
\end{proof}

Let now make a more precise result

\begin{lem}\label{lem1b}
Let $\alpha,\beta\in\mathbb{Q}\setminus \mathbb{Z}$ two rational numbers. If the function 
$$G=\int (t^2-1)^\alpha \int (t^2-1)^\beta dtdt$$
has a commutative monodromy then $\alpha-\beta\in\mathbb{Z}$ or $-3/2-\alpha\in \mathbb{N}$ or $-3/2-\beta\in \mathbb{N}$.
\end{lem}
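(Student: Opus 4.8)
The plan is to feed the conclusion of Lemma \ref{lem1} — that $\det A = 0$ for every pair $j_1,j_2\in\mathbb{Z}$ — into an explicit evaluation of the periods $P_\alpha(j):=\int_{\gamma_j}(t^2-1)^\alpha\,dt$, and then to read off the three alternatives from the structure of these periods as functions of $j$ and $\alpha$. The point is that these periods will factor into a purely $j$-dependent part and a purely $\alpha$-dependent part expressible through a Beta integral.

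First I would compute the periods. Write $c=e^{2\pi i\alpha}$, and note $\gamma_j=\delta_1^{\,j}\delta_{-1}^{-j}$, where $\delta_{\pm1}$ is the counter-clockwise loop around $\pm1$ based at $0$. The monodromy of the primitive $F=\int_0^t(s^2-1)^\alpha ds$ around a single $\delta_{\pm1}$ is affine, $\sigma_{\delta_1}(F)=cF+\mu_1$ and $\sigma_{\delta_{-1}}(F)=cF+\mu_{-1}$, where $\mu_{\pm1}=\oint_{\delta_{\pm1}}(s^2-1)^\alpha ds$ is the period of the based loop. Collapsing $\delta_1$ onto $[0,1]$ (valid for $\mathrm{Re}\,\alpha>-1$ and continued analytically in $\alpha$) gives $\mu_1=(1-c)\int_0^1(s^2-1)^\alpha ds=(1-c)\tfrac12 e^{i\pi\alpha}J(\alpha)$, where $J(\alpha)=\int_{-1}^1(1-t^2)^\alpha dt=\sqrt{\pi}\,\Gamma(\alpha+1)/\Gamma(\alpha+3/2)$; the symmetry $t\mapsto -t$ gives $\mu_{-1}=-\mu_1$. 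Iterating the two affine maps $j$ times and composing, the factors $c^{\pm j}$ telescope and I obtain the clean factorization
$$P_\alpha(j)=\int_{\gamma_j}(t^2-1)^\alpha dt=(c^{-j}-1)\,e^{i\pi\alpha}J(\alpha).$$
One checks that $j=1$ yields $-2i\sin(\pi\alpha)J(\alpha)$, and that the apparent poles of $J$ at negative integers are cancelled by the vanishing of $c^{-j}-1$ there, confirming that the contour integral is entire in $\alpha$.

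Setting $K_\alpha:=e^{i\pi\alpha}J(\alpha)$ and $d=e^{2\pi i\beta}$, the determinant becomes $\det A=K_\alpha K_\beta\big[(c^{-j_1}-1)(d^{-j_2}-1)-(c^{-j_2}-1)(d^{-j_1}-1)\big]$. If $K_\alpha=0$ or $K_\beta=0$ we are done, because $K_\alpha=0\iff J(\alpha)=0$, and since $1/\Gamma$ vanishes exactly on the non-positive integers, $J(\alpha)=0\iff \alpha+3/2\in-\mathbb{N}\iff -3/2-\alpha\in\mathbb{N}$ (the factor $\Gamma(\alpha+1)$ being finite there as $\alpha\notin\mathbb{Z}$). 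Otherwise $K_\alpha,K_\beta\neq0$ and the bracket must vanish for all $j_1,j_2$; taking $j_2=1$ (where $c^{-1}-1,d^{-1}-1\neq0$ since $\alpha,\beta\notin\mathbb{Z}$) forces the sequences $c^{-j}-1$ and $d^{-j}-1$ to be proportional in $j$, and comparing $j=1$ with $j=2$ through $c^{-2}-1=(c^{-1}-1)(c^{-1}+1)$ gives $c^{-1}=d^{-1}$, i.e. $e^{2\pi i\alpha}=e^{2\pi i\beta}$, i.e. $\alpha-\beta\in\mathbb{Z}$. This exhausts the three cases.

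I expect the main obstacle to be the period computation: getting the branch bookkeeping right along $\gamma_j$ — the successive factors $c^{\pm m}$ on the $m$-th loop, the sign change between the clockwise and counter-clockwise passages, and the identity $\mu_{-1}=-\mu_1$ — so that everything collapses into the single factor $(c^{-j}-1)$. The secondary subtlety is justifying the reduction to $J(\alpha)$ and its meromorphic continuation for $\mathrm{Re}\,\alpha\le-1$: here I would observe that for the fixed contour $\gamma_j$ the integral $P_\alpha(j)$ is an entire function of $\alpha$, so the identity proved for $\mathrm{Re}\,\alpha>-1$ propagates to all $\alpha\in\mathbb{Q}\setminus\mathbb{Z}$ by analytic continuation, the pole of $J$ at each negative integer being harmless since it is killed by $(c^{-j}-1)$ and those integers are excluded anyway.
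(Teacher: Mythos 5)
Your proof is correct and follows essentially the same route as the paper: evaluate the periods $\int_{\gamma_j}(t^2-1)^\alpha\,dt$ in closed form through the Beta integral $\sqrt{\pi}\,\Gamma(\alpha+1)/\Gamma(\alpha+3/2)$ extended by analytic continuation, absorb the $\Gamma$-pole cases into the alternatives $-3/2-\alpha\in\mathbb{N}$ or $-3/2-\beta\in\mathbb{N}$, and extract $\alpha-\beta\in\mathbb{Z}$ from the vanishing of the determinant of Lemma \ref{lem1} at a specific pair $(j_1,j_2)$. The only divergences are cosmetic: your prefactor $(c^{-j}-1)$ versus the paper's $(1-e^{2ij\pi\alpha})$ is a harmless path-composition/branch convention artefact that cancels in the determinant, and your endgame at $(j_1,j_2)=(2,1)$ with the clean factorization $(c^{-1}-1)(d^{-1}-1)(c^{-1}-d^{-1})$ is if anything tidier than the paper's choice $(1,-1)$ followed by a squaring manipulation, which nominally admits the spurious branch $\alpha+\beta\in\mathbb{Z}$ that a direct factorization (yours, or $\sin \pi\alpha\,\sin\pi\beta\,\sin\pi(\alpha-\beta)$ for the paper's expression) rules out.
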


\begin{proof}
Let us first compute the determinant of matrix $A$ of the last Lemma. For $\alpha >-1$, we find that
$$\int\limits_{\gamma_j} (t^2-1)^\alpha dt=\left(1-e^{2ij\pi\alpha}\right)\int\limits_{-1}^1 (t^2-1)^\alpha dt=
\frac{(1-e^{2ij\pi\alpha})e^{i\pi\alpha}\Gamma(\alpha+1)\sqrt{\pi}} {\Gamma(\alpha+3/2)}$$
Let us now remark that the first expression is real analytic in $\alpha$. So, even if the second integral is only defined for $\alpha>-1$, we can use analytic continuation to obtain that the equality
$$\int\limits_{\gamma_j} (t^2-1)^\alpha dt= \frac{(1-e^{2ij\pi\alpha})e^{i\pi\alpha}\Gamma(\alpha+1)\sqrt{\pi}} {\Gamma(\alpha+3/2)}$$
is always valid for any $\alpha\in\mathbb{R}$. Remark still that the fact that $j$ is an integer is important, as it ensure that the right expression has no singularity. We can now compute the determinant of matrix $A$ which is
$$\frac{e^{i\pi(\alpha+\beta)}\Gamma(\alpha+1)\pi \Gamma(\beta+1)}{\Gamma(\alpha+3/2)\Gamma(\beta+3/2)} \times$$
$$(e^{2ij_2\pi\alpha}+e^{2ij_1\pi\beta}-e^{2ij_2\pi\beta}-e^{2ij_1\pi\alpha}+e^{2ij_1\pi\alpha+2ij_2\pi\beta}-e^{2ij_1\pi\beta+2ij_2\pi\alpha})$$
The $\Gamma$ terms in the denominator become singular if $\alpha$ or $\beta$ belongs to $\{-3/2,$ $-5/2,-7/2,\dots\}$. This is a special case of the Lemma. So the only interesting term is
$$(e^{2ij_2\pi\alpha}+e^{2ij_1\pi\beta}-e^{2ij_2\pi\beta}-e^{2ij_1\pi\alpha}+e^{2ij_1\pi\alpha+2ij_2\pi\beta}-e^{2ij_1\pi\beta+2ij_2\pi\alpha})$$
which becomes for $j_1=1,j_2=-1$
$$2i(-\sin(2\pi\beta)+\sin(2\pi\alpha)+\cos(2\pi\alpha)\sin(2\pi\beta)-\sin(2\pi\alpha)\cos(2\pi\beta))$$
Replacing the $\sin$ by $\cos$ and removing the square roots by multiplication with conjugates, we obtain that if this expression vanishes, then
$$16(\cos(\pi\alpha)^2-1)(\cos(\pi\beta)^2-1)(\cos(\pi\beta)^2-\cos(\pi\alpha)^2)=0$$
As $\alpha,\beta\notin \mathbb{Z}$, the only possibility left is that $\alpha-\beta\in\mathbb{Z}$.
\end{proof}

To prove Theorem \ref{thmmain1}, we will need to study the monodromy of some particular function.

\begin{lem}\label{lem2}
For $k\in\mathbb{Z}\setminus\{-2,-1,0,1,2\},\; l\in\mathbb{N},\;n\geq 0$ or $k=1,\; l\in\mathbb{N}^*$, the monodromy of
$$G_{l,k}(t)=\int \left(\int (t^2-1)^{-(k+1)/k} dt \right)^{l+1} (t^2-1)^{1/k} dt$$
is non commutative.
\end{lem}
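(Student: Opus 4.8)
\noindent
Write $\alpha=1/k$ and $\beta=-(k+1)/k=-1-\alpha$, so that the outer factor is $(t^2-1)^{\alpha}$ and the inner integral is $R:=\int(t^2-1)^{\beta}\,dt$ (note $R=Q_k/P_k$ in the notation of the first variational equation). For $0\le m\le l+1$ put $F_m=\int R^{m}(t^2-1)^{\alpha}\,dt$, so that $G_{l,k}=F_{l+1}$ and $F_0=\int(t^2-1)^{\alpha}\,dt$. The plan is to realise the monodromy of $G_{l,k}$ on an explicit finite-dimensional solution space and to exhibit a non-commuting pair of loops. First I would produce the linear equation satisfied by $G_{l,k}$ over the field $\mathbb{C}(\mathcal{W})$ of functions on the Riemann surface $\mathcal{W}$ of $(t^2-1)^{1/k}$, by the peeling procedure: differentiating $F_{l+1}$ and dividing successively by the functions $(t^2-1)^{\alpha}$ and $(t^2-1)^{\beta}$, which are single-valued on $\mathcal{W}$, lowers the power of $R$ by one at each step, and after $l+3$ steps one reaches $0$. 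The resulting operator has order $l+3$, and its solution space is exactly $V=\mathrm{Span}_{\mathbb{C}}\{1,F_0,F_1,\dots,F_{l+1}\}$; hence the monodromy representation attached to $G_{l,k}$ is carried by $V$.

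\noindent
Next I would compute this representation. Since $(t^2-1)^{\beta}$ is single-valued on $\mathcal{W}$, a loop $\gamma$ only shifts $R$ by its period, $\sigma_\gamma R=R+b_\gamma$ with $b_\gamma=\oint_\gamma(t^2-1)^{\beta}\,dt$. Expanding $R^{m}$ and using that $\frac{d}{dt}$ commutes with $\sigma_\gamma$ gives, for a constant $c^{(m)}_\gamma$,
$$\sigma_\gamma F_m=\sum_{j=0}^{m}\binom{m}{j}b_\gamma^{\,m-j}F_j+c^{(m)}_\gamma,\qquad c^{(0)}_\gamma=a_\gamma:=\oint_\gamma(t^2-1)^{\alpha}\,dt.$$
Thus each $\sigma_\gamma$ is unipotent and upper-triangular in the flag $1\prec F_0\prec\dots\prec F_{l+1}$, and $\mathrm{Span}_{\mathbb{C}}\{1,F_0,F_1\}$ is an invariant subspace. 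Because restriction to an invariant subspace is a group homomorphism, it suffices to find one invariant sub-quotient on which the monodromy is non-abelian.

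\noindent
For $k\neq 1$ the restriction to $\mathrm{Span}_{\mathbb{C}}\{1,F_0,F_1\}$ is precisely the monodromy of $G_{0,k}=\int(t^2-1)^{\alpha}\int(t^2-1)^{\beta}\,dt\,dt$, the function of Lemma~\ref{lem1}: on this block $\sigma_\gamma$ is the $3\times3$ Heisenberg matrix with super-entries $a_\gamma$ and $b_\gamma$, so the corner of $[\sigma_{\gamma_1},\sigma_{\gamma_2}]$ equals $a_{\gamma_1}b_{\gamma_2}-a_{\gamma_2}b_{\gamma_1}=-\det A$. It remains to make this non-zero for some pair of loops. Here $\alpha,\beta\notin\mathbb{Z}$ since $k\neq\pm1$, and the three exceptional cases of Lemma~\ref{lem1b} all require $k\in\{-2,-1,1,2\}$: indeed $\alpha-\beta=1+2/k\in\mathbb{Z}$ forces $k\mid 2$, while $-3/2-\alpha=-3/2-1/k$ is never a non-negative integer and $-3/2-\beta=-1/2+1/k\in\mathbb{N}$ only for $k=2$. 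As all of these are excluded by hypothesis, Lemma~\ref{lem1b} yields a non-commutative restriction, and therefore the monodromy of $G_{l,k}$ is non-commutative.

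\noindent
The remaining case $k=1$ is genuinely harder, which is exactly why the statement there requires $l\ge 1$. Now $(t^2-1)^{\alpha}=t^2-1$ is a polynomial, so $a_\gamma\equiv 0$, $F_0\in\mathbb{C}(\mathcal{W})$ is monodromy-fixed, and the bottom block is abelian; the non-commutativity must be sought one level higher, in the invariant subspace $\mathrm{Span}_{\mathbb{C}}\{1,F_0,F_1,F_2\}$, which is available as soon as $l\ge1$. The same unipotent computation shows that the corner of $[\sigma_{\gamma_1},\sigma_{\gamma_2}]$ on this block is proportional to $\det\left(\begin{smallmatrix}b_{\gamma_1}&c^{(1)}_{\gamma_1}\\ b_{\gamma_2}&c^{(1)}_{\gamma_2}\end{smallmatrix}\right)$, where the constant $c^{(1)}_\gamma$ is the period-type quantity obtained by carrying $R\,(t^2-1)$ around $\gamma$. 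Since $R=\int(t^2-1)^{-2}\,dt$ carries logarithmic singularities at $\pm1$ coming from the residues of $(t^2-1)^{-2}$, the period $b_\gamma$ depends linearly on the winding numbers of $\gamma$ whereas $c^{(1)}_\gamma$ depends on them quadratically through the shift of $R$; I would compute both explicitly and exhibit two loops making the determinant non-zero. This logarithmic period computation is the main obstacle, as it cannot be delegated to Lemma~\ref{lem1b} and must be carried out by hand.
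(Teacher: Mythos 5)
Your treatment of the generic case $k\in\mathbb{Z}\setminus\{-2,-1,0,1,2\}$ is correct, and in substance it is the paper's own reduction: both arguments show that the monodromy of $G_{l,k}$ controls the monodromy of the double integral $F_1=G_{0,k}=\int(t^2-1)^{\alpha}\int(t^2-1)^{\beta}\,dt\,dt$, and then invoke Lemma~\ref{lem1b}, whose exceptional cases ($\alpha$ or $\beta$ integer, $\alpha-\beta\in\mathbb{Z}$, $-3/2-\alpha\in\mathbb{N}$, $-3/2-\beta\in\mathbb{N}$) all force $k\in\{-2,-1,1,2\}$; your arithmetic verification of this matches the paper's list of five conditions. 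Only the packaging differs: the paper picks a single monodromy element $\mu$ translating $R=\int(t^2-1)^{\beta}dt$ by a non-zero constant, notes that $\mu^p(G_{l,k})-h(p)$ is a polynomial in $p$ lying in the field $K$ generated by the branches of $G_{l,k}$, and extracts the coefficient of $p^{l}$ to conclude $F_1\in K$ (equations \eqref{eq5}--\eqref{eq6}); you instead exhibit the order-$(l+3)$ operator with solution space $\mathrm{Span}_{\mathbb{C}}\{1,F_0,\dots,F_{l+1}\}$ and restrict the unipotent triangular representation to the invariant subspace $\mathrm{Span}_{\mathbb{C}}\{1,F_0,F_1\}$. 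These are equivalent; your version is arguably more transparent, while the paper's polynomial-in-$p$ identity is the form that gets re-used verbatim in the proof of Lemma~\ref{lemmain} (equations \eqref{eq7}--\eqref{eq8}).

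For $k=1$, however, your proof stops short, and this is a genuine gap, not a routine omission. You correctly observe that the bottom block is abelian (all periods $a_\gamma$ of $t^2-1$ vanish), that one must pass to $\mathrm{Span}_{\mathbb{C}}\{1,F_0,F_1,F_2\}$ (whence the hypothesis $l\geq 1$), and that non-commutativity is equivalent to $b_{\gamma_1}c^{(1)}_{\gamma_2}-b_{\gamma_2}c^{(1)}_{\gamma_1}\neq 0$ for some pair of loops; but you then defer the computation of these constants. That non-vanishing is the entire content of the $k=1$ case, it cannot be delegated to Lemma~\ref{lem1b} (as you yourself note), and without it the statement for $k=1,\;l\in\mathbb{N}^*$ is unproven --- while being genuinely needed, since the proof of Lemma~\ref{lemmain} relies precisely on the invertibility of the coefficient of $p^{l-1}$ when $k=1$. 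The paper closes this step by computing $F_2=\int\left(\int(t^2-1)^{-2}dt\right)^2(t^2-1)\,dt$ explicitly: since $(t^2-1)^{-2}$ has non-zero residues at $\pm1$, its primitive $R$ contains logarithms, and the closed form of $F_2$ then contains a dilogarithmic term, whose monodromy is non-commutative (Heisenberg-type); this is exactly what makes your $2\times2$ determinant non-zero. To complete your proof you must either carry out this closed-form computation or evaluate the period-type constants $b_\gamma, c^{(1)}_\gamma$ for two explicit loops and check the determinant directly.
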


\begin{proof}
We consider the differential field $K=\mathbb{C}(t,G_{l,k})$. Differentiating $G_{l,k}$, we have that $K$ contains the function
$$\left(\int (t^2-1)^{-(k+1)/k} dt \right)^{l+1}$$
This function is transcendental, so we can find a monodromy element $\mu$ and $\alpha\in\mathbb{C}^*$ such that
$$\mu\left(\int (t^2-1)^{-(k+1)/k} dt \right)= \int (t^2-1)^{-(k+1)/k} dt+ \alpha$$
So we get
\begin{equation}\label{eq5}
\mu^p(G_{l,k})-h(p)= \int \left(\int (t^2-1)^{-(k+1)/k} dt+p\alpha \right)^{l+1} (t^2-1)^{1/k} dt
\end{equation}
with $h(p)\in\mathbb{C}$. We also have that $\mu^p(G_{l,k})-h(p)$ is in $K$ for all $p\in\mathbb{Z}$. The right hand side of equation \eqref{eq5} is a polynomial in $p$, it is in $K$ for all $p$ so each coefficient in $p$ is in $K$. In particular, we get that
\begin{equation}\label{eq6}
\int \int (t^2-1)^{-(k+1)/k} dt (t^2-1)^{1/k} dt \in K
\end{equation}
We can now apply Lemma \ref{lem1b}. If the monodromy is commutative, then we are in one of the following cases
$$\frac{1}{k} \in\mathbb{Z},\;\; -\frac{k+1}{k} \in\mathbb{Z},\;\; \frac{k+1}{k}-\frac{3}{2} \in\mathbb{N},\;\; -\frac{3}{2}-\frac{1}{k} \in\mathbb{N},\;\; \frac{1}{k}+\frac{k+1}{k} \in\mathbb{Z}$$
None of these cases is possible if $\mid k\mid >2$. The case $k=1$ is a particular case. Indeed, the function \eqref{eq6} has a commutative monodromy. Still we also have using equation \eqref{eq5}
\begin{equation}\label{eq6bis}
\int \left(\int (t^2-1)^{-2} dt\right)^2 (t^2-1) dt \in K
\end{equation}
We can explicitly compute this equation, and we find a dilogarithmic term, and so the monodromy is non commutative.
\end{proof}

\subsection{The rigidity result}

Let us now prove Theorem \ref{thmmain1}. Our proof will be a non constructive one, meaning that we do not find integrable homogeneous potentials with multiple Darboux points. We will only prove that all of them are already known, and more precisely that there exists at most one such potential (for each homogeneity degree $k$).

\begin{lem}\label{lemmain}
Let $V_1,V_2$ be two integrable holomorphic homogeneous potentials on $\Omega$ of degree $k\neq -2,-1,0,2$ with a multiple Darboux point of the form $c=(1,0,\dots)\in\Omega$. Then $V_1=V_2$.
\end{lem}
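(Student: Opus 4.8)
The plan is to upgrade the statement to a uniqueness result for the full Taylor expansion at $c$: I will show that integrability together with the multiple Darboux point condition determines every coefficient $\partial_{q_2^m}V(c)$ of $V$ at $c=(1,0,\dots)$. Since a homogeneous potential of degree $k$ is recovered from its restriction $q_2\mapsto V(1,q_2)$, i.e. precisely from the pure derivatives $\partial_{q_2^m}V(c)$, equality of all these coefficients forces $V_1=V_2$ as holomorphic homogeneous functions. I would proceed by induction on the order $m$ of the Taylor coefficients of $V$ at $c$.

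For the base case, the reduction-by-rotation-dilatation lemma fixes all coefficients through order $2$, giving $V(c+q)=1+kq_1+\tfrac12 k(k-1)q_1^2+\tfrac{\lambda}{2}q_2^2+O(q^3)$ with a diagonal Hessian $\mathrm{diag}(k(k-1),\lambda)$. Proposition \ref{prop1} yields $k\in\mathrm{Sp}(\nabla^2V(c))=\{k(k-1),\lambda\}$, and since $k\neq 2$ this forces $\lambda=k$. Hence the normal variational equation is exactly $\tfrac12 k^2(t^2-1)\ddot X+k(k-1)t\dot X-kX=0$ with solutions $P_k=(t^2-1)^{1/k}$ and $Q_k=P_k\int(t^2-1)^{-(k+1)/k}dt$, and $V_1,V_2$ share all coefficients of order $\le 2$.

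For the inductive step, assume $V_1,V_2$ agree on all coefficients of order $\le l$. By Remark \ref{remhom}, differentiating Euler's relation expresses every derivative of order $l+1$ in terms of strictly lower ones — which coincide by hypothesis — with the sole exception of the pure coefficient $d_{l,l+1}=\partial_{q_2^{l+1}}V(c)$. Thus $V_1$ and $V_2$ can differ at order $l+1$ only in $d_{l,l+1}$. I then examine the order-$(l+1)$ equation \eqref{nondeg} for the normal component $y_{0,0,0,1}$: among its forcing terms it contains $\tfrac{d_{l,l+1}}{l!}\,\phi^{k_0(k-1-l)}y_{0,0,0,l}$, where $y_{0,0,0,l}$ is the $l$-th symmetric power solution whose most transcendental part is $Q_k^l=P_k^l(\int(t^2-1)^{-(k+1)/k}dt)^l$. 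Solving by variation of constants against $P_k,Q_k$ (Wronskian $(t^2-1)^{(1-k)/k}$) and reverting to the variable $t$, the solution acquires a contribution $P_k\,G_{l,k}(t)$ whose coefficient is an affine function of $d_{l,l+1}$: a nonzero constant times $d_{l,l+1}$, plus an expression in the lower-order coefficients alone. Granting that the leading constant is nonzero — the non-degeneracy property of \cite{10}, valid precisely for $k\neq -1$, which is why $k=-1$ is excluded here and treated separately in \cite{10} — integrability of each $V_j$ forces, via Theorem \ref{thmmorales}, the identity component of the Galois group of the variational equation to be abelian, hence its monodromy over the associated Riemann surface to be commutative. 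By Lemma \ref{lem2} the function $G_{l,k}$ has non-commutative monodromy, so its coefficient must vanish; as the lower-order coefficients coincide for $V_1$ and $V_2$, this pins $d_{l,l+1}$ to the same value for both. The induction then yields $V_1=V_2$.

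The main obstacle is the non-degeneracy step: proving that the coefficient of $G_{l,k}$ in $y_{0,0,0,1}$ really depends, with a nonzero linear coefficient and uniformly in $l$, on the free parameter $d_{l,l+1}$, rather than degenerating to be independent of it. This demands carrying the variation-of-constants computation far enough to isolate the top-transcendence term $P_k\,G_{l,k}$ and to confirm both that its $d_{l,l+1}$-coefficient never vanishes and that no lower-order forcing term reproduces the same $G_{l,k}$ in a way that could cancel it — the delicate estimate that singles out $k=-1$ as the genuinely exceptional degree.
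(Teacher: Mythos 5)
Your induction skeleton coincides with the paper's own proof: the base case via Proposition \ref{prop1} forcing $\lambda=k$ (since $k\neq 0,2$), Remark \ref{remhom} leaving only $d_{l,l+1}$ undetermined at order $l+1$, the function $G_{l,k}$ carrying the $d_{l,l+1}$-dependence of the solution, and commutativity of monodromy imposed by integrability. However, the step you explicitly ``grant'' is the entire mathematical content of the paper's argument, and the plan you sketch for filling it (isolate the coefficient of $G_{l,k}$, prove that no lower-order forcing term can cancel it, uniformly in $l$) is both harder than necessary and not what is actually needed. The paper decomposes $y^{(g)}=y_{hom}^{(g)}+y_{part_1}^{(g)}+d_{l,l+1}^{(g)}G_{l,k}$, observes that $y_{hom}^{(g)}$ and $y_{part_1}^{(g)}$ involve only order-$\le l$ data and hence can be chosen \emph{identical} for $g=1,2$, then applies a monodromy commutator $\sigma$ --- which acts trivially on solutions by Theorem \ref{thmmorales} --- to the whole family of solutions $\mu^p(y^{(g)})$, $p\in\mathbb{Z}$, and extracts the coefficient of $p^l$ using \eqref{eq7}. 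That coefficient is an affine expression in $d_{l,l+1}^{(g)}$ whose constant part is built only from $y_{hom},y_{part_1}$, hence equal for $g=1,2$, and whose linear part is $C_{l+1}^l\alpha^l\beta\,d_{l,l+1}^{(g)}$ with $\beta$ the determinant of Lemma \ref{lem1}. Subtracting the two equations, every term you were worried about --- including any hypothetical $G_{l,k}$-contribution from lower-order forcing --- cancels identically, and $d_{l,l+1}^{(1)}=d_{l,l+1}^{(2)}$ follows from the single, $l$-independent analytic fact $\beta\neq 0$, which is exactly the $\Gamma$-function computation of Lemma \ref{lem1b}, proved in this paper rather than imported from \cite{10}. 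The missing idea in your write-up is precisely this $\mu^p$-expansion and comparison trick; there is no ``delicate estimate'' uniform in $l$ to be carried out, since $l$ enters only through the harmless nonzero factor $C_{l+1}^l\alpha^l$.

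There is also a concretely false assertion: you claim the non-degeneracy holds ``precisely for $k\neq -1$''. It fails for $k=1$ as well --- there the double integral \eqref{eq6} has commutative monodromy, so the constant $\beta$ in \eqref{eq7} vanishes --- and $k=1$ is allowed by the hypotheses of Lemma \ref{lemmain}. The paper handles this case separately: one extracts the coefficient of $p^{l-1}$ instead of $p^l$ in \eqref{eq8}, which is nonzero thanks to the dilogarithmic term in \eqref{eq6bis} exhibited in the $k=1$ case of Lemma \ref{lem2}. As written, your induction silently breaks down at $k=1$.
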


\begin{proof}
To prove the Lemma, it is sufficient to prove that all derivatives of $V_1,v_2$ at $c$ are equal. Let us proof this result by recurrence.

As we assumed that $c$ is a Darboux point for both $V_1,V_2$, we have $V_1(c)=V_2(c)$, and their first derivatives are equal. The point $c$ is a multiple Darboux point for both potentials, so the eigenvalue $k$ belongs to the spectrum of both Hessian matrices of $V_1,V_2$. Their eigenvectors are equal (these are $c$ and a vector orthogonal to $c$), and thus the Hessian matrices are also equal. So the second derivatives are equal.

Let us now assume that all derivatives of $V_1,V_2$ are equal up to order $l\in\mathbb{N}, \; l\geq 3$. Let us now prove that the derivative of order $l+1$ are then also equal.

We first remark that using homogeneity, all derivatives of order $l+1$ of $V_1,V_2$ are equal except maybe $\partial_{q_2}^{l+1} V_1=d_{l,l+1}^{(1)}$, $\partial_{q_2}^{l+1} V_2=d_{l,l+1}^{(2)}$. This is here that we need to use the integrable hypothesis. The $l$-th variational equation (one of them) of $V_1,V_2$ is given respectively by
\begin{equation}\label{nondeg}\begin{split}
\ddot{X}=\phi(t)^{k_0(k-2)} \left(\begin{array}{cc}k(k-1)&0\\ 0&k \end{array}\right)  X +\left(\begin{array}{c}
\sum \limits_{i=2}^{n} \phi(t)^{k_0(k-1-i)}\sum\limits_{j=0}^i \frac{d_{i,j}^{(g)}}{(i-j)!j!} X_2^j X_1^{i-j}\\ \sum \limits_{i=2}^{n} \phi(t)^{k_0(k-1-i)}\sum\limits_{j=0}^i \frac{d_{i,j+1}^{(g)}}{(i-j)!j!} X_2^j X_1^{i-j}\\ \end{array}\right)\\
\end{split}\end{equation}
with $g=1,2$ respectively.

Let us now look at equation \eqref{nondeg}. This is a non homogeneous linear equation, so once we have found the expression of the non homogeneous terms, we can solve it using variation of parameters. Remark also that the highest order derivatives $d_{l,l+1}^{(g)}$ only appears in this equation and not in the lower order ones. We write the solution of the second equation of \eqref{nondeg} (after the variable change $k_0\dot{\phi}\phi^{k_0-1}/\sqrt{2} \longrightarrow t$)
\begin{equation}\label{solnon}
y^{(g)}(t)=y_{hom}^{(g)}(t)+y_{part_1}^{(g)}(t)+d_{l,l+1}^{(g)} y_{part_2}^{(g)}(t)
\end{equation}
isolating the term in $d_{l,l+1}^{(g)}$. The part $y_{hom}^{(g)}(t)$ is a solution of the homogeneous part, the solution $y_{part_1}^{(g)}(t)$ is a particular solution of equation \eqref{nondeg} without the term in $d_{l,l+1}^{(g)}$ and $y_{part_2}^{(g)}(t)$ is a particular solution of equation \eqref{nondeg} where all non homogeneous terms are removed except the one in $d_{k,k+1}^{(g)}$. Remark that the terms $y_{hom}^{(g)},y_{part_1}^{(g)}$ can be chosen equal for $g=1,2$ as the only difference between the variational equations of $V_1,V_2$ at order $l$ is $d_{l,l+1}^{(g)}$. Now let us look at $y_{part_2}^{(g)}$. It can be computed and one solution is
$$y_{part_2}^{(g)}(t)=\int \left(\int (t^2-1)^{-(k+1)/k} dt \right)^{l+1} (t^2-1)^{1/k} dt=G_{l,k}(t)$$
which is valid for both $g=1,2$. Thanks to Lemma \ref{lem2}, we know that the monodromy of this function $G_{l,k}$ is not commutative, and with Lemma \ref{lem1} that there exists a monodromy commutator $\sigma$ such that (with $\mu$ as in Lemma \ref{lem2})
\begin{equation}\begin{split}\label{eq7}
\sigma\mu^p(G_{l,k})=(p\alpha)^{l+1}\int (t^2-1)^{1/k} dt+\\
C_{l+1}^l (p\alpha)^l \left(\int \int (t^2-1)^{-(k+1)/k} dt (t^2-1)^{1/k} dt+\beta\right)  +\\
\sum\limits_{i=0}^{l-1} C_{l+1}^i (p\alpha)^i\sigma\left(\int \left(\int (t^2-1)^{-(k+1)/k} dt\right)^{n+1-i} (t^2-1)^{1/k} dt\right)
\end{split}\end{equation}
with $\beta$ a non-zero constant (as this $\beta$ corresponds to the determinant of Lemma \ref{lem2}) when $k\neq 1$. So applying this on $y^{(g)}$ gives
$$\sigma(y^{(g)})=\sigma(y_{hom}^{(g)})+\sigma(y_{part_1}^{(g)})+d_{l,l+1}^{(g)}\sigma(y_{part_2}^{(g)})$$
Remark that $\mu^p(y^{(g)})$ is also a solution of the second equation of \eqref{nondeg}. Now selecting the coefficient in $p^l$ of this equation, we get
\begin{equation}\begin{split}\label{eq8}
0=<p^l> \sigma\mu^p(y^{(g)})- \mu^p(y^{(g)})=<p^l> \sigma\mu^p(y^{(g)}_{hom})+\sigma\mu^p(y^{(g)}_{part_1}) -\\
\mu^p(y^{(g)}_{hom})-\mu^p(y^{(g)}_{part_1})+C_{l+1}^l (p\alpha)^l \beta d_{l,l+1}^{(g)}
\end{split}\end{equation}
So for $g=1,2$, we obtain $2$ affine functions in $d_{l,l+1}^{(1)},d_{l,l+1}^{(2)}$ respectively. As $y_{hom}^{(g)},y_{part_1}^{(g)}$ can be chosen equal for $g=1,2$, this is the same affine function, and as $\beta\neq 0$, their solutions are equal $d_{l,l+1}^{(1)}=d_{l,l+1}^{(2)}$.

In the special case $k=1$, we have $\beta=0$ in equation \eqref{eq7}, but the coefficient in $p^{l-1}$ of $\sigma\mu^p(G_{l,k})-G_{l,k}$ is non zero thanks to Lemma \ref{lem2}. So we just have to pick up the term in $p^{l-1}$ instead of $p^l$ in equation \eqref{eq8}. Thus the coefficient $p^{l-1}$ of equation \eqref{eq8} is an invertible affine function in $d_{l,l+1}^{(g)}$, and so the two solutions are equal $d_{l,l+1}^{(1)}=d_{l,l+1}^{(2)}$. 

Now to conclude, all the derivatives of $V_1,V_2$ coincide at $c$. This implies that $V_1=V_2$.
\end{proof}

Remark that in the hypotheses of this Lemma, the algebraic surfaces $\mathcal{S}$ and open sets $\Omega$ on which $V_1,V_2$ are defined could be different. The only thing we used is that $c$ is a multiple Darboux point for both of them, and that it lies in the open sets of definition for both $V_1,V_2$. Let us now prove Theorem \ref{thmmain1}.

\begin{proof}[Proof of Theorem \ref{thmmain2}]
After reduction by rotation dilatation, we can assume that a holomorphic potential $V_1$ on $\Omega$ with a multiple Darboux point $c$ is such that $c$ is of the form $c=(1,0,\dots)$. The potential
$$V_2=r^k\qquad \Omega=\{(q_1,q_2,r)\in\mathbb{C}^3,\;\; r^2=q_1^2+q_2^2,\; r\neq 0\}$$
is meromorphically integrable, and $c=(1,0,1)$ is a Darboux point. If $V_2$ is meromorphically integrable, than the hypotheses of Lemma \ref{lemmain} are satisfied, and thus $V_1=V_2$. The case $k=-1$ was already settle in \cite{10}.
\end{proof}

\section{Conclusion}

We have generalized Theorem $1$ of \cite{10} for any negative homogeneity degree $k<0$. We could of course ask what happen for positive homogeneity degree. Our result of Theorem \ref{thmmain1} still holds, but it is no longer sufficient to make a complete classification of real analytic integrable homogeneous potentials. Indeed, there are several admissible eigenvalues in the Morales-Ramis table lower than $k$, especially for $k=3,4,5$ which contains probably all exceptional cases of integrable potentials (see \cite{95}). Their complete classification seem to be possible but further work is required. In higher dimension, the multiple Darboux point analysis could also be interesting. The relation on eigenvalues is generalized in higher dimension in \cite{25}, and multiple Darboux points are still a problematic case for this relation. However, in higher dimension, there are many possible cases of multiplicity (depending on the rank of the Jacobian in Definition \ref{def1}), and also possibly continuums of Darboux points. Thus studying all cases for which this generalized relation does not hold seems uneasy. On the other hand, Theorem \ref{thmmain1} could probably be generalized in higher dimension: the maximum principle also leads to a multiple Darboux point, with a rank $1$ Jacobian in Definition \ref{def1}.

\bigskip

\label{}
\bibliographystyle{model1-num-names}
\bibliography{bibthese}

\end{document}